\newtheorem{theorem}{Theorem}[section]
\newtheorem{lemma}[theorem]{Lemma}
\newtheorem{corollary}[theorem]{Corollary}
\newtheorem{claim}{Claim}[theorem]
\newtheorem{conjecture}[theorem]{Conjecture}
\newtheorem{question}[theorem]{Question}
\newtheorem*{lund}{Lund's Theorem}
\newtheorem*{beck}{Beck's Theorem}
\newtheorem*{Do}{Do's Theorem}
\newtheorem*{blah}{Theorem}
\newcommand{\del}{\setminus}
\newcommand{\con}{/}
\DeclareMathOperator{\cl}{cl}
\DeclareMathOperator{\DG}{DG}
\newcommand{\bF}{\mathbb{F}}
\newcommand{\bR}{\mathbb{R}}
\newcommand{\bZ}{\mathbb{Z}}
\newcommand{\bC}{\mathbb{C}}
\newcommand{\cL}{\mathcal{L}}
\newcommand{\cF}{\mathcal{F}}
\newcommand{\cH}{\mathcal{H}}
\newcommand{\cT}{\mathcal{T}}
\newcommand{\cP}{\mathcal{P}}
\newcommand{\cW}{\mathcal{W}}
\newcommand{\cI}{\mathcal{I}}
\begin{document}

\title{Average plane-size in complex-representable matroids}

\author{Rutger Campbell}
\address{Discrete Mathematics Group, Institute for Basic Science (IBS), Daejeon, Republic of Korea} 
\author{Jim Geelen}
\address{Department of Combinatorics and Optimization, University of Waterloo, Waterloo, Canada} \author{Matthew E. Kroeker}
\address{Department of Combinatorics and Optimization, University of Waterloo, Waterloo, Canada} 

\thanks{This research was partially supported by the Institute for Basic Science [IBS-R029-C1], by grants from Office of Naval Research [N00014-10-1-0851] and NSERC [203110-2011], and by an NSERC Postgraduate Scholarship [Application No. PGSD3 - 547508 - 2020]}

\begin{abstract}
Melchior's inequality implies that the average line-length in a simple, rank-$3$, real-representable matroid is less than $3$. A similar result holds for complex-representable matroids, using Hirzebruch's inequality, but with a weaker bound of $4$.
We show that the average plane-size in a simple, rank-$4$, complex-representable matroid is bounded above by an absolute constant, unless the matroid is the direct-sum of two lines.
We also prove that, for any integer $k$, in complex-representable matroids with rank at least $2k-1$, the average size of a rank-$k$ flat is bounded above by a constant depending only on $k$. Finally, we prove that, for any integer $r\ge 2$, the average flat-size in rank-$r$ complex-representable matroids is bounded above by a constant depending only on $r$.
We obtain our results using a theorem, due to Ben Lund, that gives a good estimate on the number of rank-$k$ flats in a complex-representable matroid.
\end{abstract}

\maketitle

\sloppy

\section{Introduction}

Melchior's inequality \cite{Melchior} implies that, given a finite set $X$ of points in $\bR^d$, not all on a line, the average length of a spanned line is less than $3$.  Here, a line is {\em spanned} if it hits at least two points of $X$, and the {\em length} of the line is the number of points of $X$ hit by the line. We consider analogous problems for spanned planes as well as higher dimensional flats. We also consider points in $\bC^d$ instead of $\bR^d$. There is an analogue of Melchior's result known for complex-space, due to Hirzebruch \cite{Hirzebruch}, which implies that the average length of a spanned line in a finite set of points in $\bC^d$, not all on a line, is less than $4$.

Even though we are intrinsically concerned with finite sets of points in $\bR^d$ or $\bC^d$, we find it convenient to state our results and proofs using the language of matroid theory, which we summarize in Appendix~A. For example, in this language, Melchior's result  states that the average line-length in a simple real-representable matroid with rank at least $3$ is less than $3$. Note that, by truncation, it suffices to prove this result when the rank is equal to $3$. More generally, if we are interested in the average size of rank-$k$ flats in a matroid $M$, then we may as well truncate $M$ to rank-$(k+1)$. Therefore, we are implicitly interested the average size of hyperplanes. 

Given Hirzebruch's result, one might hope that average plane-size is bounded in rank-$4$ complex-representable matroids, but this is not the case.
In the direct sum of two lines of length $a$, that is $U_{2,a}\oplus U_{2,a}$, all planes have size $a+1$, so we can make the average plane-size arbitrarily large by choosing $a$ appropriately. We prove however that, for a rank-$4$ complex-representable matroid $M$
the average plane-size is bounded unless $M$ is the direct sum of two lines.
\begin{theorem}\label{main_planes}
In a simple complex-representable matroid $M$ with rank at least $4$, the average plane-size is bounded above by an absolute constant, unless $M$ is the direct sum of two lines.
\end{theorem}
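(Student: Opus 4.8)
The plan is to reduce first to the rank-$4$ case by truncation, which leaves the planes (rank-$3$ flats) and their sizes unchanged while preserving complex-representability; note that in rank at least $5$ the matroid cannot even truncate to a direct sum of two lines, since all points would then lie on two lines and the rank would be at most $4$, so the exceptional case genuinely only occurs in rank $4$. After truncating, planes are exactly the hyperplanes. Writing $W_3$ for the number of planes, the average plane-size is $\left(\sum_P |P|\right)/W_3$, summing over all planes $P$, and since $\sum_P |P|$ is precisely the number of incidences between points and planes, the goal becomes to show that this incidence count is at most an absolute constant times $W_3$, unless $M$ is a direct sum of two lines. Throughout I would calibrate against the extremal example $U_{2,a}\oplus U_{2,a}$, where $W_3=2a$ but $\sum_P|P|\approx 2a^2$, to see exactly which configurations must be excluded.

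The first tool is a local analysis by contraction. For each point $p$, the simplification $\si(M/p)$ is a simple, rank-$3$, complex-representable matroid whose points are the lines of $M$ through $p$ and whose lines are the planes of $M$ through $p$, where a plane $P\ni p$ becomes a line of $\si(M/p)$ of length equal to the number of lines of $M$ through $p$ inside $P$. Applying Hirzebruch's inequality inside each $\si(M/p)$ controls this local structure. In particular, writing $d(\ell)$ for the number of planes containing a line $\ell$, the bound ``average line-length less than $4$'' gives, for every $p$, that the number of planes through $p$ exceeds $\tfrac14\sum_{\ell\ni p} d(\ell)$; summing over all $p$ yields $\sum_P |P| > \tfrac14 \sum_\ell |\ell|\,d(\ell)$. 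More importantly, the full strength of Hirzebruch's inequality shows that a single long line forces the presence of many $2$-point lines, so long lines are geometrically expensive and cannot be too numerous. This is what will let me separate the behaviour of short and long lines.

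The engine of the argument is Lund's estimate for the number of rank-$3$ flats. I would fix a constant threshold $t$ and call a line \emph{long} if it has more than $t$ points. The dichotomy is then: if the points are not essentially covered by a bounded number of long lines, Lund's lower bound should guarantee that $W_3$ is large---comparable to the incidence count $\sum_P|P|$---so that the average plane-size is bounded by an absolute constant. If instead almost all of the incidence structure is organized around only a bounded number of long lines, a stability analysis must show that $M$ is forced to be a direct sum of two lines.

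I expect this last, stability step to be the main obstacle. It is not enough to show that scarcity of planes (equivalently, large average plane-size) forces an \emph{approximate} direct-sum structure; one must upgrade this to the exact conclusion that $M=U_{2,a}\oplus U_{2,b}$. The delicate points are bounding the number of long lines and controlling how they meet: two long lines that intersect, or a single long line together with a spread-out remainder, must be ruled out using the rank-$4$ geometry together with Hirzebruch's inequality inside the relevant contractions $\si(M/p)$. Carrying out this case analysis, and matching the constant from Lund's estimate against the loss in the local Hirzebruch bounds, is where I expect the real work to lie.
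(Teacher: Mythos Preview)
Your broad strategy---truncate to rank $4$, compare $\sum_P|P|$ to $|\cP|$, and invoke a Lund-type lower bound on $|\cP|$---matches the paper's. The paper shows $\sum_P|P|\le 6n(n-|X_2|)(n-|X_3|)$ by partitioning planes according to how they meet a fixed maximal $3$-degenerate set $X_3$ and its longest line $X_2$, then divides by $|\cP|\ge \varrho_3\, n(n-|X_2|)(n-|X_3|)$. Your local Hirzebruch computation in $\si(M/p)$ is correct but is not used in the paper and does not obviously feed into either side of this comparison.

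Where you diverge is in the residual case. You frame it as a \emph{stability} problem: ``upgrade approximate direct-sum structure to the exact conclusion $M=U_{2,a}\oplus U_{2,b}$.'' That is the wrong target. Take the direct sum of two long lines and add one generic point: this matroid is as close as possible to a direct sum without being one, yet its average plane-size is bounded (indeed, adding that point creates $\Theta(n^2)$ new three-point planes). So the task is not to show ``near-degenerate $\Rightarrow$ exactly degenerate''; it is to show ``not exactly degenerate $\Rightarrow$ enough planes,'' uniformly, with no threshold. This is precisely the content of the paper's Theorem~\ref{lund_planes}: Lund's bound $|\cP|\ge \rho_3\, n(n-g_2)(n-g_3)$ is sharpened by proving one may take $c_3=0$, i.e.\ the bound holds as soon as $M$ is not $3$-degenerate. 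The proof is a direct case analysis ($X_3$ a plane versus $X_3$ a union of two skew lines, and $|X_2|$ large versus small), counting planes explicitly in each case; no rigidity or approximate-structure argument is needed. Your sketch does not supply this step, and the tools you list (Hirzebruch in contractions, long-line dichotomy) do not obviously yield it.
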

We prove our result by strengthening a theorem of Lund that estimates, up to a multiplicative error, the number of planes in a complex representable matroid. We do not explicitly determine the constant in Theorem~\ref{main_planes}, and while our proof does give a computable bound it is certainly far from best possible (Section~\ref{spec} contains further discussion about the constants).

We would like to find an analogue of Theorem~\ref{main_planes} for the average-size of rank-$k$ flats in simple complex-representable matroids with rank at least $k+1$; we don't yet have a satisfactory conjecture, but we do have the following interesting partial result. 
\begin{theorem}\label{main_flats}
For an integer $k\ge 2$, in a simple complex-representable matroid with rank at least $2k-1$, the average size of a rank-$k$ flat is bounded above by a constant depending only on $k$.
\end{theorem}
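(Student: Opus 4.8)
The plan is to induct on $k$, reducing the average-size bound to an incidence inequality and then passing to contractions so that the inductive hypothesis applies one rank lower. Writing $f_j(N)$ for the number of rank-$j$ flats of a matroid $N$, the assertion that the average rank-$k$ flat size is at most some constant $C_k$ is equivalent to the inequality $\sum_{F}|F|\le C_k\, f_k(M)$, where the sum runs over rank-$k$ flats $F$ and $|F|$ is the number of points of $M$ in $F$. The left-hand side counts incident pairs $(p,F)$ with $p$ a point and $F$ a rank-$k$ flat through $p$; grouping these pairs by $p$ gives
\begin{equation*}
\sum_{F}|F|=\sum_{p}f_{k-1}(\operatorname{si}(M/p)),
\end{equation*}
since the rank-$k$ flats of $M$ through $p$ correspond exactly to the rank-$(k-1)$ flats of $M_p:=\operatorname{si}(M/p)$. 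This is the identity that lets me move from rank-$k$ flats of $M$ to rank-$(k-1)$ flats of the smaller, still simple and complex-representable, matroids $M_p$.

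The base case is $k=2$: here rank at least $2k-1=3$ guarantees that the points are not all collinear, and Hirzebruch's inequality (quoted in the introduction) bounds the average line-length by an absolute constant. For the inductive step I would assume the theorem for $k-1$ and apply it inside each contraction. Each $M_p$ has rank $r-1\ge 2k-2\ge 2(k-1)-1$, so the inductive hypothesis controls it. The precise value $2k-1$ is exactly what makes this bookkeeping close up: contracting $j$ points sends the pair (rank, flat-rank) from $(r,k)$ to $(r-j,k-j)$, and $r-j\ge 2k-1-j\ge 2(k-j)-1$ for every $j\ge 0$, so the hypothesis survives each contraction all the way down to the line problem, which it reaches at rank at least $k+1-(k-2)=3$. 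In particular this rank threshold is what rules out the rank-$4$ obstruction $U_{2,a}\oplus U_{2,a}$ once $k\ge 3$, since that matroid has rank $4<2k-1=5$.

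Because the identity controls $\sum_F|F|$ only through the \emph{counts} $f_{k-1}(M_p)$, I would use Lund's theorem as a two-sided ($\Theta_k$) estimate for the number of flats, applying it both to $f_k(M)$ in the denominator and, after contraction, to each $f_{k-1}(M_p)$ in the numerator. The goal is to bound the incidence sum by Lund's lower estimate for $f_k(M)$ up to a factor depending only on $k$, that is, to show $\sum_p f_{k-1}(M_p)\le C_k\, f_k(M)$. Concretely I would upper-bound each $f_{k-1}(M_p)$ by Lund's estimate for the contracted matroid, sum over $p$, and then compare the resulting expression against Lund's lower estimate for $f_k(M)$; the comparison across a single contraction is where the inductive control of the $(k-1)$-dimensional problem, together with Hirzebruch-type control of line-lengths (needed because several points of $M$ on a common line through $p$ collapse to a single point of $M_p$), should be invoked.

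The main obstacle I expect is precisely this reconciliation. Both $\sum_p f_{k-1}(M_p)$ and $f_k(M)$ are governed by Lund's estimate, so a black-box substitution is essentially circular, and extracting a genuine constant-factor gap requires a quantitative \emph{strengthening} of Lund's bound rather than its bare use; this is the refinement the authors advertise for the plane case. Making the error terms uniform in $k$, and verifying that the contraction comparison survives the collapsing of collinear points without accumulating an unbounded factor, is the delicate part, and it is where I would expect the bulk of the technical work to lie.
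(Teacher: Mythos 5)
There is a genuine gap, and you have in fact located it yourself: the step ``$\sum_p f_{k-1}(\si(M/p)) \le C_k\, f_k(M)$'' is not a reduction of the problem but a restatement of it. Since the rank-$k$ flats of $M$ through $p$ correspond bijectively to the rank-$(k-1)$ flats of $\si(M/p)$, the quantity $\sum_p f_{k-1}(\si(M/p))$ is \emph{identically equal} to $\sum_F |F|$, so the inequality you need to close the induction is exactly the theorem being proved. The inductive hypothesis --- an average-size bound for rank-$(k-1)$ flats of each $M_p$ --- gives $\sum_{F'}|F'| \le C_{k-1} f_{k-1}(M_p)$ inside each contraction, which is an upper bound on an incidence sum in terms of a flat count; it supplies no upper bound on the counts $f_{k-1}(M_p)$ themselves and no mechanism for comparing their sum over $p$ with $f_k(M)$. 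Feeding Lund's theorem into both sides does not rescue this: Lund's lower bound is phrased in terms of the quantities $n-g_i$, which are not controlled under contraction, and it only applies when $n-g_k>c_k$, a hypothesis that can fail (the rank-$6$ example of Figure~\ref{fig1} shows the estimate itself can fail without it).

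The paper's proof is structured quite differently, and the two halves it needs are both absent from your plan. For the denominator it proves Theorem~\ref{strat3}, a lower bound $|\cF_k(M)| \ge \sigma_k\, n(n-|X_2|)\cdots(n-|X_k|)$ phrased in terms of an optimal $k$-stratification $(X_2,\ldots,X_k)$, which removes the $n-g_k>c_k$ hypothesis from Lund's theorem precisely when $r(M)\ge 2k-1$; the induction there passes to a hyperplane containing $X_k$ by \emph{restriction}, not to point contractions. For the numerator it proves Lemma~\ref{strat2}, the upper bound $\sum_{F\in\cF_k}(|F|-k) \le 2^{k(k-1)}\, n(n-|X_2|)\cdots(n-|X_d|)$, obtained by counting ordered independent tuples that avoid the stratification sets and separately handling the flats that contain one of the degenerate pieces of some $X_i$. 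Matching the upper bound on the incidence sum to the same product that lower-bounds the flat count is the substantive content of the theorem, and your proposal contains no candidate for either estimate.
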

For $k=3$, Theorem~\ref{main_flats} is a corollary of Theorem~\ref{main_planes}.

We also prove the following result concerning average flat-size.
\begin{theorem}\label{thm_flats}
For each integer $r$,  the average flat-size in a simple rank-$r$ complex-representable matroid is bounded above by a constant depending only upon $r$.
\end{theorem}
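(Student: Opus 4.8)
The plan is to analyze the flat lattice rank by rank. For $0\le k\le r$, write $f_k$ for the number of rank-$k$ flats of $M$ and $s_k:=\sum_{F:\,r(F)=k}|F|$ for their total size, and set $W:=\sum_{k=0}^r f_k$, the total number of flats. Then the average flat-size is $\bigl(\sum_{k}s_k\bigr)/W$, so, since there are at most $r+1$ values of $k$, it suffices to prove a bound of the form $s_k\le C(r)\,W$ for each individual $k$; summing then gives average flat-size at most $(r+1)C(r)$. The easy ranks are immediate: $s_0=0$, while $s_1=f_1\le W$, and the unique top flat satisfies $s_r=n=f_1\le W$, where $n$ is the number of points of $M$. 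For every $k\ge 2$ in the range $2k-1\le r$, Theorem~\ref{main_flats} applies and bounds the average size of a rank-$k$ flat by a constant $c_k$; hence $s_k\le c_k f_k\le c_k W$, as required.

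The remaining ranks, those $k$ with $2k-1>r$ (and $k\le r-1$), are where the real work lies, since Theorem~\ref{main_flats} is unavailable there. It is worth keeping two extreme cases in mind, because any argument must cover both. In a generic matroid $U_{r,n}$ every proper flat of rank $k$ has size exactly $k$, so $s_k\le (r-1)f_k\le (r-1)W$ because the flats are \emph{small}; whereas in $U_{2,a}\oplus U_{2,a}$ the top proper flats are large, but there are $\Theta(a^2)$ two-point lines, so $s_k$ is controlled by a \emph{lower} layer of the lattice. Thus the bound must genuinely be made against the full count $W$, with the dominant layer differing from case to case. To reduce the high ranks to a regime we can control, I would use the contraction identity
\[
 s_k=\sum_{p}f_{k-1}\bigl(\si(M/p)\bigr),
\]
where $p$ ranges over the points of $M$ and each $\si(M/p)$ has rank $r-1$; this converts the size-sum for rank-$k$ flats of $M$ into a sum of flat-\emph{counts} in matroids of smaller rank. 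Iterating this (equivalently, inducting on $r$) exactly $2k-r-1$ times lowers the relevant flat-rank into the range where Theorem~\ref{main_flats} governs the corresponding size-sums, and at each stage the count produced is to be compared against $W$ using Lund's estimate for the number of flats of a given rank.

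The hard part will be precisely this last comparison for the high ranks: showing that $\sum_{p}f_{k-1}\bigl(\si(M/p)\bigr)\le C(r)\,W(M)$, i.e.\ that accumulating intermediate flat-counts over all single-element contractions does not exceed a constant multiple of the total number of flats of $M$. This is the step that cannot be done by elementary double-counting alone, since the naive estimate $s_k\le n f_k$ is far too lossy in the generic case while being essentially tight in the direct-sum-of-lines case; the two-sided control provided by Lund's theorem on the number of rank-$k$ flats is the only input that simultaneously handles the near-generic regime (flats small, top layer dominant) and the highly degenerate regime (flats large, but a low layer enormous). Assembling these pieces yields an explicit, though certainly far from optimal, constant $C(r)$, in line with the discussion of constants in Section~\ref{spec}.
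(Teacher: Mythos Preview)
Your proposal has a genuine gap at exactly the point you yourself flag as ``the hard part''. The contraction identity $s_k(M)=\sum_p f_{k-1}(\si(M/p))$ is correct, but after one application it produces a sum of \emph{counts}, not of sizes; there is no further identity that converts $f_{k-1}(M/p)$ into an expression at rank $k-2$. So the phrase ``iterating this exactly $2k-r-1$ times lowers the relevant flat-rank into the range where Theorem~\ref{main_flats} governs the corresponding size-sums'' does not parse. Worse, if you sum your identity over all $k$ you recover $\sum_k s_k(M)=\sum_p W(\si(M/p))$, and the inequality $\sum_p W(\si(M/p))\le C(r)\,W(M)$ is literally the theorem restated, so the reduction is vacuous. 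You correctly sense that Lund's two-sided estimates are the right input, but you never show how to deploy them for the high ranks; the proposal stops at the doorstep of the real work.

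The paper's argument bypasses this difficulty entirely and does not invoke Theorem~\ref{main_flats} at all. The key is Lemma~\ref{strat2}: for \emph{every} $k\ge 2$ it bounds
\[
\sum_{F\in\cF_k}(|F|-k)\ \le\ 2^{k(k-1)}\,n(n-g_2)\cdots(n-g_d),
\]
where $d$ is the essential dimension of $M$; the crucial feature is that the right-hand product is the \emph{same} for all $k$. This is matched by Theorem~\ref{counting_flats}, which gives $|\cF(M)|\ge \alpha_r\, n(n-g_2)\cdots(n-g_d)$. Dividing yields a bound on the average size of a rank-$k$ flat that is uniform in $k$, and summing over the $r+1$ values of $k$ finishes. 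The substantive content is thus the stratification Lemma~\ref{strat2} (a careful greedy-basis counting argument), which simultaneously handles the generic and degenerate regimes you describe; your proposed contraction/induction scheme would need an analogue of it, and none is supplied.
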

For $r=4$, Theorem~\ref{thm_flats} is a corollary of Theorem~\ref{main_planes} combined with Hirzebruch's result. Moreover, Theorem~\ref{main_flats} follows from Theorem~\ref{thm_flats} using
results of Huh and Wang on ``top-heavyness''; see Section~\ref{top-heavy}. Nevertheless, we will give a direct proof of both Theorems~\ref{main_flats} and~\ref{thm_flats}.

\section{Speculation}\label{spec}

In this section, we discuss what we know concerning lower bounds for the constants in our results, and pose several conjectures due to the second author. The conjectures essentially say that, asymptotically, the worst-case behaviour is exhibited by Dowling Geometries; these are defined in Appendix~B. We also provide some justification for the claims about the average flat-sizes in the appendix; see Lemma~\ref{weakbounds}. We typically consider Dowling Geometries $\DG(r,\bZ_t)$ where $r$ is fixed and $t$ goes to infinity. These matroids are complex-representable but not real-representable, so the conjectures may not be best-possible for real-representable matroids.

Throughout this section $M$ denotes a simple $n$-element complex-representable matroid with rank at least $3$.
We let $\cF_k(M)$ denote the set of rank-$k$ flats of $M$ and denote $\cF_2(M)$ and $\cF_3(M)$ by $\cL(M)$ and $\cP(M)$ respectively.
We will drop the specific reference to $M$ when there is little potential for ambiguity.

Starting with lines in real-representable matroids, Melchior's Inequality can be written as:
$$ \sum_{L\in \cL}(|L|-3) \le - 3.$$
Equality is attained when $M$ has all but one point on a single line. Thus the average line-length can get arbitrarily close to $3$.

In the complex-representable case Hirzebruch showed that, unless all but one point of $M$ lie on a single line,
$$ \sum_{L\in \cL}(|L|-4) \le - (n+m_2),$$
where $m_2$ denotes the number of $2$-point lines. This implies that the average line-length is less than $4$. According to Pokora~\cite{Pokora}, there is only one matroid that gives equality, namely $\DG(3,\bZ_3)\del\{v_1,v_2,v_3\}$, which is isomorphic to the ternary affine plane. However the average line-length in that example is only $3$. For $t>4$, the average line-length in $\DG(3,\bZ_t)\del\{v_1,v_2,v_3\}$ is strictly larger than $3$, but it tends to $3$ when $t$ is large. Peter Nelson (personal communication) asked whether $4$ is in fact best possible. The best example that we know of is the dual configuration of Wiman's line arrangement, which has 45 elements and an average line-length of $\frac{240}{67}\approx 3.58.$
\begin{conjecture}\label{conj_lines}
The average line-length in a simple $n$-element, rank-$3$, complex-representable matroid is at most $3+o_n(1)$.
\end{conjecture}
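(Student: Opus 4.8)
The plan is to reduce the conjecture, via Hirzebruch's inequality, to a precise statement about the scarcity of long lines. For $k\ge 2$ write $t_k$ for the number of lines of length $k$, so that $|\cL|=\sum_k t_k$, the total line-length is $\sum_k k\,t_k$, and $m_2=t_2$. Since a line of length $3$ contributes $0$ and a $2$-point line contributes $-1$ to $\sum_k(k-3)t_k$, the average line-length is at most $3+o_n(1)$ if and only if
\[
\sum_{k\ge 4}(k-3)\,t_k \;\le\; t_2 + o_n(1)\cdot|\cL| .
\]
In other words, the entire content of the conjecture is that the ``excess length'' carried by lines of length at least $4$ is, up to a term of lower order in $|\cL|$, compensated by the $2$-point lines.

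First I would extract from Hirzebruch's inequality $\sum_k(k-4)t_k\le -(n+t_2)$ everything it says about long lines; rearranging gives $\sum_{k\ge 5}(k-4)t_k + n\le t_2+t_3$. This controls lines of length at least $5$, but only in terms of $t_2+t_3$ rather than $t_2$ alone, and it is entirely silent about $4$-point lines, which carry coefficient $0$. Hence the gap between Hirzebruch's constant $4$ and the conjectured $3$ is concentrated exactly on lines of length $4$ (together with the need to replace the crude bound $t_2+t_3$ by a lower-order error). Any proof must therefore produce a genuinely new penalty for $4$-point lines that Hirzebruch does not supply.

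I see two plausible routes. The analytic route is to sharpen Hirzebruch's inequality using finer log-Chern or orbifold Bogomolov--Miyaoka--Yau corrections for the dual complex line arrangement, aiming for an inequality in which $4$-point lines acquire a strictly positive coefficient. The structural route, in the spirit of Theorem~\ref{main_planes}, is to use Lund's estimate on the number of flats to prove a dichotomy: either the excess $\sum_{k\ge 4}(k-3)t_k$ is already $o_n(1)\cdot|\cL|$, or the configuration is forced to approximate a Dowling geometry $\DG(3,\bZ_t)$, for which a direct computation shows the average line-length tends to $3$ as $t\to\infty$. In both routes I would use the identity $\sum_k\binom{k}{2}t_k=\binom{n}{2}$ and the de Bruijn--Erd\H{o}s bound $|\cL|\ge n$ to separate the ``few lines'' regime $|\cL|=\Theta(n)$, where near-pencil-type structure already keeps the average near $3$, from the dangerous ``many lines'' regime $|\cL|/n\to\infty$.

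The hard part is precisely the step that has kept the conjecture open: no known inequality penalizes $4$-point lines in a complex line arrangement. Moreover, the dual of Wiman's arrangement, with $45$ points, for which the average exceeds $3$ by $\tfrac{39}{67}\approx 0.58$, shows that the excess-to-line ratio can be a positive constant for small $n$, so the desired bound cannot possibly follow from a single clean linear inequality with negative right-hand side; the $o_n(1)$ term is genuinely necessary. I therefore expect the crux to be an \emph{asymptotic stability} statement: a proof that complex line arrangements whose long-line excess is a positive proportion of $|\cL|$ must resemble the Dowling or Hesse configurations, combined with the direct verification that on those near-extremal families the average line-length converges to $3$ from above.
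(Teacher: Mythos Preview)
The statement you are addressing is Conjecture~\ref{conj_lines}, and the paper does \emph{not} prove it; it is explicitly posed as an open problem in Section~\ref{spec}. There is no proof in the paper to compare against. Your proposal is not a proof either, and you acknowledge as much: you outline two possible routes (a sharpened BMY-type inequality, or a Lund-style stability dichotomy towards Dowling geometries) and correctly identify the obstruction, namely that Hirzebruch's inequality assigns coefficient $0$ to $4$-point lines and hence cannot by itself close the gap between $4$ and $3$.

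Your diagnosis of the difficulty is accurate and matches the paper's own discussion: the authors note that the average line-length in $\DG(3,\bZ_t)\setminus\{v_1,v_2,v_3\}$ tends to $3$, that the Wiman dual with average $\tfrac{240}{67}$ is the worst known example, and that Nelson has asked whether $4$ is actually sharp. Your observation that the $o_n(1)$ term is forced by small examples like Wiman is correct. However, neither of your two routes is carried to a conclusion, and the ``structural route'' in particular would require a stability theorem for complex line arrangements that does not currently exist in the literature. So what you have written is a reasonable survey of where the problem stands, not a proof; since the paper makes no claim to a proof either, there is nothing further to adjudicate.
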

Here the $o_n(1)$ term denotes a function, depending on $n$, that tends to $0$ as $n$ goes to infinity. 

The matroid $\DG(4,\bZ_2)$ is real-representable and has average plane-size equal to $6$. For $t>2$, the average plane-size of
$\DG(4,\bZ_t)$ is strictly larger than $6$, but it tends to $6$ for large $t$.
\begin{conjecture}\label{conj_planes}
The average plane-size in a simple $n$-element, rank-$4$, complex-representable matroid, that is not the union of two lines, is at most $6+o_n(1)$.
\end{conjecture}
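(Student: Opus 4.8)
The plan is to upgrade Theorem~\ref{main_planes} from an unspecified absolute constant to the sharp value $6$ by controlling the numerator and denominator of the average-plane-size ratio $\frac{1}{|\cP|}\sum_{P\in\cP}|P|$ separately and matching their constants. For the denominator, the natural input is Lund's theorem, which estimates $|\cP|$ up to a multiplicative error; the goal would be to show that, on configurations approaching the extremal ratio, this error is only $1+o_n(1)$. For the numerator $\sum_{P\in\cP}|P|$, which counts point-plane incidences, I would reduce the rank-$4$ problem to the rank-$3$ (line) problem by contraction: for a point $e$, the simplification $\si(M/e)$ is a simple rank-$3$ complex-representable matroid whose lines are exactly the planes of $M$ through $e$, and the length in $\si(M/e)$ of the line arising from a plane $P\ni e$ equals the number $\ell(e,P)$ of lines of $M$ through $e$ that lie in $P$.

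Applying the complex line bound inside each $\si(M/e)$ and summing over all points gives, after reindexing the point-line-plane incidences,
$$\sum_{L\in\cL}|L|\,p(L)\ <\ 4\sum_{P\in\cP}|P|,$$
where $p(L)$ is the number of planes through the line $L$, and the constant $4$ would become the conjectural sharp line constant $3$ of Conjecture~\ref{conj_lines}. A second use of the line bound, now inside each plane $P$ regarded as a rank-$3$ complex matroid, relates the within-plane incidences $\sum_{e\in P}\ell(e,P)$ to $|P|$ and to the number of lines of $P$. The Dowling geometries $\DG(4,\bZ_t)$, whose average plane-size tends to $6$, both fix this target and are the conjectured unique asymptotic extremizers, so the aim is to make the chain of estimates tight enough that the Lund denominator and the incidence numerator force the ratio down to $6$.

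The hard part will be the constants, and this is exactly what leaves the statement a conjecture. With the tools in hand the inequalities above only reproduce boundedness, as in Theorem~\ref{main_planes}, and three independent sources of slack would have to be removed at once to reach $6$. First, the sharp value requires the optimal line constant $3$, namely Conjecture~\ref{conj_lines}, which is itself open: Hirzebruch's inequality delivers only $4$, and the loss is compounded by being used twice. Second, the multiplicative error in Lund's theorem is presently only bounded rather than $1+o_n(1)$, so it controls the denominator too coarsely. Third, the composition of the contraction estimate with the within-plane estimates introduces further slack that one would have to show vanishes asymptotically. Absent an exact rank-$4$ analogue of Hirzebruch's inequality that would bound $\sum_{P\in\cP}\bigl(|P|-6\bigr)$ directly and short-circuit this chain, driving all three errors down to $o_n(1)$ — with the Dowling geometries as the extremal certificates — is the genuine obstacle.
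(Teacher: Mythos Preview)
The statement is a conjecture, and the paper does not prove it; it is posed in Section~\ref{spec} as speculation, supported only by the Dowling-geometry lower bound and by Theorem~\ref{main_planes}, which gives an unspecified absolute constant rather than $6+o_n(1)$. So there is no proof in the paper to compare your proposal against.

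Your proposal is not a proof either, and you are explicit about this: you sketch a strategy (control the incidence sum $\sum_P |P|$ via contractions to rank~$3$ and compare to Lund's lower bound on $|\cP|$) and then correctly identify the three sources of slack---the unproved sharp line constant of Conjecture~\ref{conj_lines}, the multiplicative error in Lund's theorem, and the loss from composing two applications of the line bound---that block the argument from reaching $6$. That diagnosis is accurate and consistent with the paper's own view that the conjecture is open. As a minor point, your identity ``lines of $\si(M/e)$ correspond to planes of $M$ through $e$'' is fine, but the quantity you get from applying the line bound in $\si(M/e)$ controls $\sum_{P\ni e}\ell(e,P)$ rather than $\sum_{P\ni e}|P|$ directly, and bridging that gap without further loss is itself nontrivial; you allude to this but it deserves emphasis as a fourth obstacle.

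In short: neither you nor the paper proves the statement, and your write-up is a reasonable heuristic discussion of why it is hard, not a proof attempt with a fixable gap.
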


For fixed $r\ge 2$, the average flat-size in $\DG(r,\bZ_t)$ tends to ${r\choose 2}$, contrary to a conjecture made in an earlier version of this paper.
\begin{conjecture}\label{conj_flats1}
For any fixed integer $r\ge 3$,
the average size of a flat in a simple $n$-element, rank-$r$ complex-representable matroid is at most ${r\choose 2}+o_n(1)$.
\end{conjecture}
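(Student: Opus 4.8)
The plan is to reduce Conjecture~\ref{conj_flats1} to a single sharp statement about hyperplanes---that in a simple rank-$s$ complex-representable matroid on $n$ points the average size of a rank-$(s-1)$ flat is at most $\binom{s}{2}+o_n(1)$---and then to attack that statement by induction on $s$, whose base case $s=3$ is exactly Conjecture~\ref{conj_lines}. For \textbf{Step 1 (reduction to hyperplanes)} I would write the average flat-size as $\big(\sum_{k}\sum_{F\in\cF_k}|F|\big)\big/\sum_k|\cF_k|$. For each $k\le r-1$, the rank-$k$ flats of $M$ are precisely the hyperplanes of the generic rank-$(k+1)$ truncation of $M$, which is again complex-representable and has the same point set; hence the average size of a rank-$k$ flat equals the average hyperplane-size in a rank-$(k+1)$ complex-representable matroid, which the hyperplane bound controls by $\binom{k+1}{2}+o_n(1)\le\binom{r}{2}+o_n(1)$. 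The only remaining term is the ground set itself (the unique rank-$r$ flat), contributing $n$ to the numerator and $1$ to the denominator; this is $o_n(1)$ whenever the total number of flats is superlinear in $n$. The degenerate regime, in which the number of flats is only $\Theta(n)$ (near-pencils and the like, where $n-1$ points lie on a common flat), I would dispose of by a direct computation showing the average lies well below $\binom{r}{2}$. Granting the hyperplane bound at every rank $s\le r$, these estimates assemble into Conjecture~\ref{conj_flats1}.

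\textbf{Step 2 (the sharp hyperplane bound)} is the crux, and is where complex-representability must be used quantitatively. The natural inductive mechanism is contraction: the hyperplanes of $M$ through a point $e$ are exactly the hyperplanes of $M/e$, a rank-$(s-1)$ complex-representable matroid, so the induction hypothesis bounds the total hyperplane-size of each $M/e$ by $\big(\binom{s-1}{2}+o_n(1)\big)$ times its number of hyperplanes. Summing over $e$ and reorganizing via the identity $\sum_{H\in\cF_{s-1}}|H|=\sum_{e}|\cF_{s-2}(M/e)|$ is meant to produce the increment $\binom{s}{2}-\binom{s-1}{2}=s-1$ coming from the extra coordinate direction: geometrically, a hyperplane meets each of the $\binom{s}{2}$ long lines of the extremal Dowling geometry $\DG(s,\bZ_t)$ in exactly one point, which is the source of the constant $\binom{s}{2}$.

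\textbf{The main obstacle.} Closing Step~2 founders on a persistent mismatch: the induction hypothesis controls the \emph{total size} of the hyperplanes of $M/e$, whereas the quantity $\sum_e|\cF_{s-2}(M/e)|$ computing $\sum_H|H|$ depends on their \emph{number}, and converting between counts and sizes across ranks is precisely what a sharp inequality cannot afford to do lossily. Lund's theorem---the engine behind Theorems~\ref{main_planes} and~\ref{thm_flats}---estimates $|\cF_k|$ only up to a multiplicative constant, which suffices for Step~1 and for the non-sharp Theorem~\ref{thm_flats} but is structurally incapable of yielding the sharp leading constant $\binom{s}{2}$. I therefore expect that the sharp hyperplane bound requires a genuinely sharp Hirzebruch--Langer-type inequality (from Chern-number/BMY estimates for complex line and hyperplane arrangements), first in rank $3$, where it is precisely the open Conjecture~\ref{conj_lines} (Hirzebruch's inequality gives only the constant $4$, not $3$), and then in every higher rank. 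The rank-$3$ sharp inequality is the principal obstacle, with the sharp control of the inductive increment $s-1$ a close second.
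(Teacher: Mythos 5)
This statement is Conjecture~\ref{conj_flats1}: it is posed as an \emph{open conjecture} in the paper, with no proof given. What the paper actually proves is the much weaker Theorem~\ref{thm_flats} (average flat-size bounded by a constant depending on $r$), obtained from Lund's theorem via Theorem~\ref{counting_flats} and Lemma~\ref{strat2}; the constant $\binom{r}{2}$ in the conjecture is only supported by the Dowling-geometry computation (Lemma~\ref{weakbounds} shows $\DG(r,\bZ_t)$ has average flat-size $\binom{r}{2}+o_t(1)$). Your proposal, as you yourself candidly state, does not close the argument either: the base case of your Step~2 induction is exactly the open Conjecture~\ref{conj_lines} (Hirzebruch's inequality yields the constant $4$, not $3+o_n(1)$), and the inductive step rests on a hypothesized sharp Hirzebruch--Langer-type inequality in every rank that you do not supply. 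So what you have is a reduction scheme with an explicitly open kernel, not a proof; that is the genuine gap, and nothing in the paper can patch it. Your structural diagnosis is nonetheless accurate and consistent with the paper's own state of knowledge: Lund-type counting estimates $|\cF_k|$ only up to a multiplicative constant, which is why the authors obtain Theorem~\ref{thm_flats} but leave the sharp bound conjectural.

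Two secondary issues in the reduction itself deserve flagging, since even granting a sharp rank-$3$ inequality they would need care. First, in Step~2 the hyperplanes of $M$ through $e$ correspond to hyperplanes of $M/e$, but $M/e$ is generally not simple; the induction hypothesis applies to the simplification, where ``size'' counts rank-$1$ flats of $M/e$ rather than elements of $M$, and reconciling these multiplicities is precisely the lossy count-versus-size conversion you identify as the obstruction. Second, in Step~1 the ground-set term contributes $n/|\cF(M)|$ to the average, which is $o_n(1)$ only when $|\cF(M)|$ is superlinear in $n$; the De Bruijn--Erd\H{o}s Theorem and top-heaviness give only $|\cF(M)|\ge 2n$ in general, so the near-degenerate regime (where, by Beck's Theorem, almost all points lie on a rank-$2$ set) genuinely requires the separate direct computation you promise but do not carry out. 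The truncation step itself is sound, since generic truncation preserves complex-representability and $\binom{k+1}{2}\le\binom{r}{2}$ makes the weighted assembly of per-rank averages work, but these are scaffolding around an unproved core.
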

Note that, since we are treating $r$ as a fixed constant here, the function indicated by the $o_n(1)$ term can depend on both $r$ and $n$.

The next conjecture is a strengthening of Conjecture~\ref{conj_flats1}.
\begin{conjecture}\label{conj_flats2}
In a simple complex-representable matroid $M$, the average of $\frac{|F|}{{r(F)+1\choose 2}}$, taken over all flats of $F$ of $M$, is at most $1+o_n(1)$.
\end{conjecture}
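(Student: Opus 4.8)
The plan is to group the flats by their rank and reduce to a sharp, rank-by-rank control of average flat-size. Write $\cF(M)=\bigcup_{k\ge 1}\cF_k(M)$ (the rank-$0$ flat is excluded, since $\binom{1}{2}=0$), set $S_k:=\sum_{F\in\cF_k(M)}|F|$, and let $\bar{s}_k:=S_k/|\cF_k(M)|$ be the average size of a rank-$k$ flat. Grouping by rank gives
\[
\frac{1}{|\cF(M)|}\sum_{F\in\cF(M)}\frac{|F|}{\binom{r(F)+1}{2}}
=\frac{1}{|\cF(M)|}\sum_{k\ge 1}\frac{S_k}{\binom{k+1}{2}}
=\frac{1}{|\cF(M)|}\sum_{k\ge 1}\frac{|\cF_k(M)|\,\bar{s}_k}{\binom{k+1}{2}},
\]
so the quantity to be bounded is exactly the weighted average, with weights $|\cF_k(M)|/|\cF(M)|$, of the ratios $\bar{s}_k/\binom{k+1}{2}$. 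It therefore suffices to show two things: that $\bar{s}_k\le\binom{k+1}{2}(1+o_n(1))$ for every rank $k$ carrying non-negligible weight, and that the remaining ranks contribute $o_n(1)\cdot|\cF(M)|$ in total.

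The first part is the crux. The bound $\bar{s}_k\le\binom{k+1}{2}(1+o_n(1))$ is precisely the sharp form of the average rank-$k$-flat-size estimate: for $k=2$ it is Conjecture~\ref{conj_lines} (average line-length at most $3+o_n(1)$) and for $k=3$ it is Conjecture~\ref{conj_planes}. I would attempt it by revisiting the proof of Theorem~\ref{main_flats}, which already gives $\bar{s}_k\le C_k$ for a constant depending only on $k$ once the ambient rank is at least $2k-1$, and tracking the constants in order to extract the sharp leading coefficient $\binom{k+1}{2}$. The key input is Lund's estimate on the number of rank-$k$ flats, used there only up to a multiplicative error; the plan is to upgrade it to a count that is sharp in the leading term, with the Dowling geometries $\DG(k,\bZ_t)$ identified as the extremal configurations, together with a stability statement asserting that any matroid attaining close to the minimum number of rank-$k$ flats must resemble a Dowling geometry.

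For the high ranks $k$ (those with $2k-1>r$, where Theorem~\ref{main_flats} gives no control and the average flat-size can be as large as $n$), I would instead argue that the large flats are too rare to matter. A single rank-$k$ flat contributes at most $n/\binom{k+1}{2}$ to $S_k/\binom{k+1}{2}$, while $|\cF(M)|\ge|\cF_1(M)|=n$, so each such flat contributes at most $1/\binom{k+1}{2}$ to the overall average; combined with the fact that most high-rank flats are small and with the top-heavy growth of $|\cF_k(M)|$ from the results of Huh and Wang, this should give a total high-rank contribution that is $o_n(1)$ when $r$ is large.

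The main obstacle is twofold. First, even the base case of the sharp per-rank bound is open: the best available estimate for lines is Hirzebruch's $\bar{s}_2<4$, not the conjectured $3+o_n(1)$, so a genuinely new ingredient --- a sharp, stability-type strengthening of Hirzebruch's inequality and its higher-rank analogues --- is needed, and everything downstream rests on it. Second, and more delicate at small rank, the contribution of a single heavy top flat need not vanish: for a rank-$3$ near-pencil (all but one point on a line) one computes the average to be $\tfrac{13}{12}+o_n(1)>1$, so the heavy-flat contribution is genuinely $\Theta(1)$ there. Thus the clean constant $1$ appears to survive only when the rank $r$ is large, or when degenerate configurations are excluded as in Conjecture~\ref{conj_planes}; pinning down the precise hypotheses under which the bound holds is likely the subtlest point of the whole argument.
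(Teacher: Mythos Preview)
This statement is Conjecture~\ref{conj_flats2}; the paper offers no proof of it, so there is nothing to compare against. Your proposal is, by your own admission, not a proof but a reduction to other open problems: the per-rank bound $\bar s_k\le\binom{k+1}{2}(1+o_n(1))$ is, for $k=2$, exactly Conjecture~\ref{conj_lines}, and for $k=3$ it is (after excluding the direct sum of two lines) Conjecture~\ref{conj_planes}. Since those are themselves open, the argument is conditional at best, and you say as much.

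The more substantive content of your submission is the closing observation, which appears to refute the conjecture as literally stated. For the rank-$3$ near-pencil with $n-1$ points on a line $L$ and one point off it, the nonempty flats are: $n$ points (ratio $1$ each), the long line $L$ (ratio $(n-1)/3$), the $n-1$ two-point lines (ratio $2/3$ each), and the ground set (ratio $n/6$). The average over these $2n+1$ flats is
\[
\frac{13n/6-1}{2n+1}\;\longrightarrow\;\frac{13}{12}>1,
\]
so the $o_n(1)$ term does not absorb the excess. If one excludes the full ground set the average becomes $(2n-1)/(2n)\to 1$, so the discrepancy may be a matter of convention about which flats are counted; but as written, your example stands and is worth communicating to the authors.
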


For real-representable matroids we do not know of constructions whose average flat-size is super-linear in the rank.
\begin{question}
Is the average size of a flat in a simple $n$-element, rank-$r$ real-representable matroid at most $O(r)$?
\end{question}

The final two conjectures are of a different nature; they concern generalizations from complex-representable matroids to arbitrary matroids. We state the conjectures for lines and planes, but they should generalize to higher dimensional flats in matroids with sufficiently high rank.
\begin{conjecture}\label{conj_matroids}
For every real number $\alpha_2$, there is a real number $\alpha_3$ such that, for any simple matroid $M$ with rank at least $6$, if the average line-length of $M$ is at most $\alpha_2$, then the average plane-size is at most $\alpha_3$.
\end{conjecture}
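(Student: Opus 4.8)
The plan is to turn both averages into flag-counts and to extract the only usable quantitative content of the hypothesis. Write $n=|E(M)|$, $\ell=|\cL|$, and $p=|\cP|$. The hypothesis $\sum_{L\in\cL}|L|\le\alpha_2\ell$ together with the identity $\sum_{L\in\cL}\binom{|L|}{2}=\binom{n}{2}$ (each pair of points lies on a unique line) is all we have, and its one clean consequence is a Markov bound: for every $k$, at most an $\alpha_2/k$ fraction of the lines have length at least $k$, so \emph{most lines are short}. I would build the whole argument on this "most lines short'' statement, together with the two exact identities $\sum_{P\in\cP}|P|\le\sum_{L\in\cL}t_L$ (from de Bruijn--Erd\H{o}s, $|\cL(P)|\ge|P|$, where $t_L$ is the number of planes through $L$) and $\sum_{P\in\cP}\binom{|P|}{2}=\sum_{L\in\cL}\binom{|L|}{2}\,t_L$ (counting incident point-pairs through their common line).

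The role of the rank hypothesis is to guarantee an abundance of planes. By the hard-Lefschetz property established by Braden--Huh--Matherne--Proudfoot--Wang for all matroids, the Whitney numbers of the second kind satisfy $W_i\le W_{i+1}$ whenever $2i<r-1$; for $i=2$ this requires exactly $r\ge 6$, giving $p=W_3\ge W_2=\ell$. I expect, however, that $p\ge\ell$ alone is too weak: for $U_{r,n}$ one has $p\gg\ell$, and it is precisely this surplus of \emph{small} planes that dilutes the average, whereas $\sum_P|P|\le\sum_L t_L$ divided only by $\ell$ is unbounded. So the argument will have to exploit a lower bound on $p$ that reflects how many small planes the rank forces, not merely the top-heavy comparison with $\ell$.

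The core would then be a charging argument via a sparse/dense dichotomy on planes. Call $P$ \emph{sparse} if all its lines are short; here de Bruijn--Erd\H{o}s is very lossy in our favour, since short lines force $|\cL(P)|\gg|P|$, and I would try to bound $|P|$ pointwise (or on average) against the count of short lines interior to $P$. Call $P$ \emph{dense} if it contains a long line or very many lines; using the pair-identity and splitting $\sum_L\binom{|L|}{2}t_L$ into short-line and long-line contributions, I would charge each dense plane to the scarce long lines it must contain, so that dense planes contribute a bounded amount on average. The output should be $\sum_{P}|P|\le C(\alpha_2)\,p$, which is the desired bound.

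The main obstacle is the transference from an \emph{average} bound on line-length to \emph{structural} control of planes: a small average is compatible with a few very long lines and with a few internally dense subplanes (for instance projective planes of large order), and these are exactly the configurations that inflate the plane-average. Ruling out that such planes dominate requires showing they are rare \emph{relative to the number of small planes}, a genuinely global statement that top-heavy does not by itself supply. I expect this to be where the real work lies, and the natural tool for leveraging the rank hypothesis is the pair of contraction identities $\sum_{L}|L|=\sum_{e}n(\si(M/e))$ and $\sum_{P}|P|=\sum_{e}\ell(\si(M/e))$, which recast the plane-average of $M$ as an average over line-data of the rank-$(r-1)$ minors $M/e$; making an induction on rank work through these identities, while keeping control of the average line-length of the contractions, seems to be the crux.
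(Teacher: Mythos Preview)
The statement you are attempting to prove is Conjecture~2.6 in the paper, and the paper does \emph{not} prove it. It is presented as an open problem; the surrounding discussion only records Ben Lund's observation that the rank threshold~$6$ cannot be lowered to~$5$ (via $PG(2,p)\oplus U_{2,n}$). There is therefore no ``paper's own proof'' to compare your proposal against.

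As for the proposal itself, it is a strategy sketch rather than a proof, and you explicitly acknowledge this. Your identification of the ingredients is accurate: the pair-counting identities, de Bruijn--Erd\H{o}s inside each plane, and top-heaviness for $r\ge 6$ (which is exactly what pins down the rank threshold) are all correct and relevant. But the sketch does not close. The sparse/dense dichotomy needs a quantitative statement of the form ``dense planes are few relative to $p$,'' and nothing in your toolkit supplies this for arbitrary matroids: a bound on the \emph{average} line-length is compatible with embedding many disjoint copies of $PG(2,q)$ for $q$ large, each contributing a plane of size $q^2+q+1$, provided one dilutes with enough generic points. Showing that the resulting flood of $3$-point planes always outweighs the dense ones is precisely the open content of the conjecture, and your Markov bound on long lines, the identity $\sum_P\binom{|P|}{2}=\sum_L\binom{|L|}{2}t_L$, and the contraction identities do not by themselves furnish that comparison. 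The induction-on-rank idea via $M/e$ is natural but faces the usual obstacle: contracting can raise the average line-length (collapsing a long line onto a point may merge many lines), so the hypothesis does not straightforwardly pass to the minors.

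In short: you have correctly located the difficulty, and your outline contains the right bookkeeping, but the decisive structural step---bounding the number of internally dense planes against the total plane-count using only the average line-length hypothesis---remains a genuine gap, which is consistent with the problem's status as an open conjecture.
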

Ben Lund (personal communication) showed that the bound of $6$, in Conjecture~\ref{conj_matroids}, is best-possible (refuting a conjecture in an earlier version of this paper). He constructs rank-$5$ matroids by taking the direct sum of a projective plane of order $p$ and an $n$-point line. If $n$ is much larger than $p$, then the average line-length is less than $3$, but all planes have size at least $p$.

Surely a converse to Conjecture~\ref{conj_matroids} must hold; that is, bounding average plane-size in a matroid should also bound average line-length. Moreover, one would expect that this has an easy proof, but we have not found it. 
\begin{conjecture}\label{shameful}
For every real number $\beta_3>0$, there is a real number $\beta_2>0$ such that, for any simple matroid $M$ with rank at least $4$, if the average plane-size of $M$ is at most $\beta_3$, then the average line-length is at most $\beta_2$.
\end{conjecture}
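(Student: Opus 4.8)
The plan is to argue the contrapositive through a double count between lines and planes, establish a clean global inequality, and then confront the genuine difficulty in passing from that inequality to a statement about averages. Throughout, let $\bar a$ and $\bar b$ denote the average line-length and average plane-size of $M$, and for a point $e$ write $d_2(e)$ and $d_3(e)$ for the number of lines, respectively planes, of $M$ through $e$. Since average line-length and average plane-size are unchanged by truncating $M$ to rank $4$ (such a truncation leaves every flat of rank at most $3$, together with its size, intact), I would first reduce to the case $r(M)=4$.

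The first step is a clean sum inequality. Counting point--flat incidences gives $\sum_{L\in\cL}|L|=\sum_{e}d_2(e)$ and $\sum_{P\in\cP}|P|=\sum_{e}d_3(e)$. For a fixed point $e$, the lines of $M$ through $e$ are exactly the points of $\si(M/e)$, and the planes of $M$ through $e$ are exactly the lines of $\si(M/e)$; that is, $d_2(e)=|\si(M/e)|$ and $d_3(e)=|\cL(\si(M/e))|$. As $r(M)=4$, each contraction $\si(M/e)$ has rank $3$, so the de Bruijn--Erd\H{o}s inequality in its matroid form (a simple matroid of rank at least $3$ has at least as many lines as points) yields $d_3(e)\ge d_2(e)$ for every $e$. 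Summing over $e$ gives
$$\sum_{L\in\cL}|L|\ \le\ \sum_{P\in\cP}|P|,$$
with equality attained by $U_{2,a}\oplus U_{2,a}$.

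The second step is the passage to averages, and this is where the argument stalls. The inequality above reads $|\cL|\,\bar a\le|\cP|\,\bar b$, so $\bar a\le(|\cP|/|\cL|)\,\bar b$; this already settles the conjecture whenever the number of planes is not much larger than the number of lines. The obstruction is that $|\cP|/|\cL|$ can be unbounded even when $\bar b$ stays bounded: adjoining many points in general position to a fixed rank-$4$ configuration inflates the plane-count like $n^{3}$ while the line-count grows only like $n^{2}$, so the global sum inequality is far too lossy to control the ratio $\bar a$. The real content is that a bound on the average plane-size is a first-moment condition, whereas the long lines that drive $\bar a$ upward must lie inside large planes; every plane containing a line $L$ has more than $|L|$ points, so long lines force large planes, yet a bounded mean permits a few very large planes, each of which can host many long lines while contributing little to $\sum_{P}|P|$ relative to $\sum_{L}|L|$.

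I expect this localization to be the main obstacle, and it is precisely why the statement remains a conjecture: the clean global inequality does not survive the division by $|\cL|$ and $|\cP|$. To close the gap I would attempt to prove a stronger \emph{plane-local} inequality that is stable under dilution by generic points, so that the comparison can be made plane-by-plane rather than globally; alternatively, I would try an induction on rank via contraction, using $\bar b=|\cP|^{-1}\sum_{e}|\cL(\si(M/e))|$ to reduce control of plane-sizes to control of line-lengths in the rank-$3$ contractions, where Melchior- and de Bruijn--Erd\H{o}s-type bounds are available. Either route requires a mechanism linking the distribution of plane-sizes to the line structure that I do not currently see, which is the heart of the problem.
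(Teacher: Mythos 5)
First, a point of order: the statement you were asked to prove is Conjecture~\ref{shameful}, and the paper contains \emph{no} proof of it. The authors explicitly remark that one would expect an easy proof but that they have not found one, so there is nothing to compare your attempt against, and your proposal --- by your own candid admission --- does not close the problem either. That said, the partial content you establish is correct. Truncating to rank $4$ preserves every flat of rank at most $3$ together with its size, hence preserves both averages. The identifications $d_2(e)=|\si(M\con e)|$ and $d_3(e)=|\cL(\si(M\con e))|$ are right; after truncation $\si(M\con e)$ is simple of rank $3$, so the De Bruijn--Erd\H{o}s theorem (Theorem~\ref{DB-E}) gives $d_3(e)\ge d_2(e)$ pointwise and therefore $\sum_{L\in\cL}|L|\le\sum_{P\in\cP}|P|$. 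Your check that $U_{2,a}\oplus U_{2,a}$ attains equality is also correct (there $\sum_L|L|=\sum_P|P|=2a^2+2a$).

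The genuine gap is exactly where you locate it. Rewriting the inequality as $\bar a\le(|\cP|/|\cL|)\,\bar b$ is too lossy because the multiplier $|\cP|/|\cL|$ is unbounded: for $n$ points in general position in rank $4$ one has $|\cP|=\Theta(n^3)$ against $|\cL|=\Theta(n^2)$, even though both averages stay bounded there. More to the point, a bound on $\bar b$ is only a first-moment condition on planes, and a single very large plane --- contributing negligibly to $\bar b$ when $|\cP|$ is large --- can carry an entire rank-$3$ configuration with large average line-length; a global sum inequality cannot detect this, which is presumably why the authors, too, were unable to convert it into a proof. So the proposal correctly diagnoses the obstruction but supplies no mechanism to overcome it, and the statement remains open. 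Your suggested next steps (a plane-local inequality stable under dilution by generic points, or a finer use of the contractions $\si(M\con e)$ than De Bruijn--Erd\H{o}s permits) are reasonable directions, but as written the argument proves only the displayed sum inequality, not the conjecture.
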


\section{Average line-length}

Recall that Hirzebruch's Inequality implies that the average line-length in a {complex-representable} matroid is less than $4$. As an illustration of the methods used later in this paper,  in this section we reprove that result, with a weaker bound of $6$ on the average line-length. Our proof uses the following result due to De Zeeuw \cite{DeZeeuw} which is a variation of Beck's Theorem (see Section~3).
\begin{theorem}\label{DeZeeuw}
If $M$ is a simple, rank-$3$, complex-representable matroid with $n$ points, then either
$M$ has a line of size at least $0.9 n$, or 
$M$ has at least $\frac 1{12}n^2$ lines.
\end{theorem}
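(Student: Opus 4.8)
The plan is to recast the statement as a point--line incidence problem in the complex projective plane and then run a Beck-type double count. Since $M$ is simple of rank $3$, I would identify its ground set with a set of $n$ distinct points in $\PG(2,\bC)$, the rank-$2$ flats playing the role of lines; simplicity guarantees that any two points determine a unique line, so each of the $\binom n2$ pairs of points lies on exactly one line and $\sum_{L\in\cL}\binom{|L|}{2}=\binom n2$. The entire question is how these $\binom n2$ pairs are distributed: the first alternative (a line of size at least $0.9n$) is the case in which one line hoards almost all of the pairs, and the goal is to show that otherwise the pairs are forced onto $\Omega(n^2)$ distinct lines.

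The essential ingredient is the complex Szemer\'edi--Trotter theorem (T\'oth; Zahl), which provides the same incidence bound $O(n^{2/3}\ell^{2/3}+n+\ell)$ for $n$ points and $\ell$ lines in $\bC^2$ as over $\bR$; from it one extracts the \emph{rich-lines} estimate that at most $O\!\left(n^2/t^3+n/t\right)$ lines contain $t$ or more of the points. I would then group the lines dyadically by size and, using this estimate, bound the number of pairs carried by the lines of size in $[2^i,2^{i+1})$. In the regime where no line is longer than a small fixed fraction of $n$, the dyadic sum shows that only a bounded fraction of the $\binom n2$ pairs can lie on lines of size exceeding some large absolute constant $K$; the surviving $\Omega(n^2)$ pairs then lie on lines of size at most $K$, each of which accounts for fewer than $\binom K2$ of them, so there must be $\Omega(n^2)$ lines, which is the desired $\tfrac1{12}n^2$ bound.

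The subtle regime---and the reason the hypothesis only forbids lines of size at least $0.9n$---is that of near-maximal lines, since a bounded number of lines of size $\Theta(n)$ can by themselves carry a constant fraction of all $\binom n2$ pairs, making the naive dyadic bound too lossy to support a threshold as large as $0.9n$. The rich-lines estimate controls how many such lines there can be (for any fixed $\eta>0$ only $O(\eta^{-3})$ lines have size at least $\eta n$, and, because two distinct lines meet in at most one point, at most one line has size exceeding $0.6n$), while the hypothesis $|L|<0.9n$ for every line $L$ guarantees that these large lines cannot cover essentially all of the pairs. Making this quantitative---peeling off the boundedly many large lines, showing that a definite positive fraction of the pairs survives on short lines, and then running the dyadic count of the previous paragraph on the residual configuration---is exactly where the precise constants $0.9$ and $\tfrac1{12}$ must be earned. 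I expect this bookkeeping, rather than any single conceptual step, to be the main obstacle, compounded by the fact that the complex Szemer\'edi--Trotter theorem is itself deep and (unlike over $\bR$) admits no elementary substitute, so any self-contained treatment must either invoke it as a black box or reprove it.
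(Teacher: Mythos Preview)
The paper does not itself prove this theorem; it is quoted from De~Zeeuw and used as a black box. De~Zeeuw's argument is \emph{not} the Beck--Szemer\'edi--Trotter route you outline: it goes instead through \emph{Langer's inequality}, an algebraic-geometric strengthening of Hirzebruch's inequality for point configurations in the complex projective plane, and it is precisely this input that makes the explicit constants $0.9$ and $\tfrac1{12}$ attainable.

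Your approach would certainly establish a qualitative complex Beck theorem---some threshold $\eta<1$ and some $\gamma>0$ such that either a line has size at least $\eta n$ or there are at least $\gamma n^2$ lines---and that is all the paper actually needs downstream (the constants in Lemma~\ref{lem:lines} and Theorem~\ref{thm:lines} would simply be adjusted). The gap is in the specific numbers. The complex Szemer\'edi--Trotter theorems of T\'oth and Zahl carry large implicit constants, and those constants propagate through the dyadic rich-lines sum: tracing the standard argument, one needs the ``no long line'' threshold $\eta$ to satisfy roughly $\eta\lesssim 1/C$, where $C$ is the Szemer\'edi--Trotter constant, in order to force a positive fraction of the $\binom n2$ pairs onto short lines. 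Your device of peeling off the boundedly many near-maximal lines helps, but after the peel you are back to the same incidence bound on the residual configuration, with the same bad constant. You flag the spot ``where the precise constants $0.9$ and $1/12$ must be earned'' as the crux but offer no mechanism for earning them; I do not see how the Szemer\'edi--Trotter route reaches $\eta=0.9$ without an ingredient of the Hirzebruch/Langer type. So your proposal proves a correct and adequate substitute for Theorem~\ref{DeZeeuw}, but not Theorem~\ref{DeZeeuw} as stated.
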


Note that, if the length of a longest line in a simple, $n$-element, rank-$3$ matroid  is $g_2$, then 
\begin{align*}
|\cL|&\le 1+ g_2(n-g_2) + {n- g_2\choose 2}\\
& \le  n(n-g_2) +\frac{1}{2}(n-g_2)^2 \\
&\le \frac 3 2 n(n-g_2).
\end{align*}

The following corollary to Theorem~\ref{DeZeeuw} shows that this upper bound is of the right order of magnitude in complex-representable matroids.
\begin{lemma}\label{lem:lines}
For any simple, $n$-element,  rank-$3$, complex-representable matroid $M$, we have
$$|\cL(M)|\ge \frac 1{12} n (n-g_2),$$
where $g_2$ denotes the length of a longest line in $M$.
\end{lemma}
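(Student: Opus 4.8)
The plan is to split into two cases according to De Zeeuw's dichotomy (Theorem~\ref{DeZeeuw}) applied to $M$. If $M$ has no line of size at least $0.9n$, then De Zeeuw gives $|\cL(M)|\ge \frac1{12}n^2$, and since $n-g_2\le n$ we immediately obtain $|\cL(M)|\ge \frac1{12}n^2\ge \frac1{12}n(n-g_2)$, as required. So the real work is in the complementary case, where there is a long line; this is the step I expect to be the main obstacle, because there $n-g_2$ is small and De Zeeuw's global count $\frac1{12}n^2$ is no longer directly comparable to the target --- I need to produce on the order of $g_2(n-g_2)$ lines, which is genuinely quadratic in the worst case.

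So suppose $L$ is a line with $|L|=g_2\ge 0.9n$, and let $Y$ denote the set of the $t:=n-g_2$ points not on $L$. The first observation is that every line other than $L$ meets $L$ in at most one point (two common points would force equality with $L$), and consequently each point $p\in Y$ lies on exactly $g_2$ lines meeting $L$, namely the lines joining $p$ to the $g_2$ points of $L$; these are pairwise distinct, again because a line through two points of $L$ equals $L$. I would then set $a_\ell:=|\ell\cap Y|$ for each line $\ell\ne L$ meeting $L$ and count in two ways. Counting incidences $(p,\ell)$ with $p\in Y\cap\ell$ gives $\sum_\ell a_\ell=g_2t$; counting pairs of points of $Y$ lying on a common line, and using that each such pair spans a unique line, gives $\sum_\ell\binom{a_\ell}{2}\le\binom{t}{2}$.

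The naive bound $a_\ell\le t$ combined with the first identity yields only $g_2$ lines, which is far too weak --- it is linear rather than quadratic. The key is to control the second moment: the pair-count bound gives
$$\sum_\ell a_\ell^2 = 2\sum_\ell\binom{a_\ell}{2}+\sum_\ell a_\ell \le t(t-1)+g_2t = t(n-1),$$
using $t+g_2=n$. Writing $N$ for the number of lines $\ell\ne L$ meeting $L$, Cauchy--Schwarz then gives
$$(g_2t)^2=\Big(\sum_\ell a_\ell\Big)^2\le N\sum_\ell a_\ell^2\le N\,t(n-1)<Ntn,$$
so $N>\frac{g_2^2t}{n}$.

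Finally, since $g_2\ge 0.9n$ we have $\frac{g_2^2}{n}\ge 0.81\,n\ge\frac1{12}n$, whence $|\cL(M)|\ge N\ge\frac1{12}n(n-g_2)$. The only delicate point is the second-moment estimate; once the pair-counting inequality is in hand the remainder is a direct Cauchy--Schwarz computation, and the wide slack between $0.81$ and $\frac1{12}$ confirms that the constant is comfortable (and that the crude bound $n-1<n$ costs nothing).
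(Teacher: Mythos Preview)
Your proof is correct. The case split via De~Zeeuw's theorem matches the paper's, and in the easy case the reasoning is identical. In the long-line case, however, you take a genuinely different route: the paper directly counts two-point lines of the form $\{e,x\}$ with $e\in C=E(M)\setminus L$ and $x\in L$, observing that for each $e$ at most $|C|-1$ points of $L$ are ``spoiled'' by other elements of $C$, which yields $|\cL|\ge (|L|-|C|)\,|C|\ge 0.8\,n(n-g_2)$. You instead run a second-moment argument over \emph{all} lines meeting $L$, combining the incidence identity $\sum_\ell a_\ell=g_2t$ with the pair-count bound $\sum_\ell\binom{a_\ell}{2}\le\binom{t}{2}$ and Cauchy--Schwarz to obtain $N>g_2^2t/n\ge 0.81\,n(n-g_2)$. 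Both arguments are elementary and give comparable constants; the paper's is slightly more direct, while yours is a nice illustration that the variance bound alone already forces many lines, without needing to isolate two-point lines explicitly.
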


\begin{proof}  Let $L$ be a longest line in $M$ and let $C=E(M)\setminus L$.
We may assume $|\cL|<\frac 1{12} n^2$ since otherwise the result holds. Then, by Theorem~\ref{DeZeeuw},  we have $|L|\ge  0.9 n$ and hence $|C|\le 0.1 n$. For each point $e\in C$, consider the lines that contain $e$ and a point of $L$ but no other points of $C$.
As these lines have exactly two points, they are uniquely determined by the point in $C$ and the point in $L$. Thus
$$ |\cL| \ge  (|L|-|C|)\times |C| \ge  0.8 n (n-g_2),$$
as required.
\end{proof}

Now we can use Lemma~\ref{lem:lines} to bound average line-length.
\begin{theorem}\label{thm:lines}
The average line-length in a simple, rank-$3$, complex-representable matroid is at most $6$.
\end{theorem}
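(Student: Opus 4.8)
The plan is to write the average line-length as
\[
\text{avg} \;=\; 2 + \frac{1}{|\cL|}\sum_{L\in\cL}\bigl(|L|-2\bigr),
\]
and to control the sum with the elementary per-line inequality $|L|-2 \le \tfrac13\binom{|L|}{2}$, which holds for every line because it rearranges to $(|L|-3)(|L|-4)\ge 0$. Summing over all lines and invoking the identity $\sum_{L\in\cL}\binom{|L|}{2}=\binom n2$ (each pair of points lies on a unique line) yields $\sum_L(|L|-2)\le\tfrac13\binom n2$, so that $\text{avg}\le 2+\frac{n(n-1)}{6|\cL|}$. Everything then reduces to a good lower bound on the number of lines $|\cL|$, for which I would split on the dichotomy of Theorem~\ref{DeZeeuw}.

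If $M$ has at least $\tfrac1{12}n^2$ lines, we are immediately done, since $2+\frac{n(n-1)}{6|\cL|}\le 2+\frac{n(n-1)}{6\cdot\frac1{12}n^2}\le 4$. The remaining case, where $M$ has a line $L^*$ of size $g_2\ge 0.9n$, is the main obstacle: here $|\cL|$ may be only linear in $n$ (as in a near-pencil), so the crude bound above is far too weak, precisely because $L^*$ alone accounts for $\binom{g_2}2\approx\binom n2$ of the pairs while contributing only $g_2$ to the incidence count. The fix is to peel $L^*$ off before summing. Writing $C=E(M)\setminus L^*$ and applying the per-line inequality only to lines $L\ne L^*$ gives $\sum_{L\ne L^*}(|L|-2)\le\tfrac13\bigl(\binom n2-\binom{g_2}2\bigr)\le\tfrac13\,n(n-g_2)$, whence
\[
\text{avg} \;\le\; 2 + \frac{g_2}{|\cL|} + \frac{n(n-g_2)}{3|\cL|}.
\]

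For the first error term I use that the $g_2$ lines joining any fixed point of $C$ to the points of $L^*$ are distinct (each meets $L^*$ in at most one point), so $|\cL|\ge g_2$ and $\frac{g_2}{|\cL|}\le 1$. For the second I re-run the counting from the proof of Lemma~\ref{lem:lines} in this regime: the two-point lines joining a point of $C$ to an unblocked point of $L^*$ are distinct and number at least $(g_2-|C|)\,|C|\ge 0.8\,n(n-g_2)$, so $\frac{n(n-g_2)}{3|\cL|}<\tfrac12$. This again puts the average below $4$. Combining the two cases gives a bound of at most $4$, comfortably below the claimed $6$; the slack reflects that the per-line inequality is tight only for lines of size $3$ and $4$ and that Theorem~\ref{DeZeeuw} is not optimized. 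The single step requiring genuine care is the long-line case, exactly because the lower bound on $|\cL|$ degrades to linear there and the outsized contribution of $L^*$ must be separated out and absorbed by the two structural counts $|\cL|\ge g_2$ and $|\cL|\ge 0.8\,n(n-g_2)$.
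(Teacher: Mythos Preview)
Your proof is correct and follows the same overall strategy as the paper's: isolate the longest line $L^*$, bound its contribution using $|\cL|>g_2$ (from the $g_2$ distinct lines through a fixed point of $C$), and control the remaining lines by a pair-counting argument combined with the De~Zeeuw-based lower bound on $|\cL|$. The differences are in the bookkeeping. You invoke the dichotomy of Theorem~\ref{DeZeeuw} explicitly, whereas the paper applies Lemma~\ref{lem:lines} once to get the uniform bound $|\cL|\ge\tfrac{1}{12}n(n-g_2)$ and avoids the case split. More substantively, your per-line inequality $|L|-2\le\tfrac13\binom{|L|}{2}$, equivalent to $(|L|-3)(|L|-4)\ge 0$, together with the classical identity $\sum_{L}\binom{|L|}{2}=\binom{n}{2}$, replaces the paper's inequality $(|L|-1)^2\ge 12(|L|-4)$ and its count of ordered pairs $(e,f)$ with $f\notin L^*$. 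Your variant is a little cleaner and, as you observe, delivers the bound $4$ rather than $6$; this matches the Hirzebruch bound without invoking Hirzebruch's inequality.
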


\begin{proof}
Let $M$ be a simple, rank-$3$, complex-representable matroid with $n$ points, let $L^*$ be a longest line in $M$, and let $g_2=|L^*|$. By Lemma~\ref{lem:lines},
we have $|\cL|\ge \frac 1{12} n (n-g_2)$. Now let $\cT$ denote the collection of all ordered pairs
$(e,f)$ of distinct elements with $e\in E(M)$ and $f\in E(M)\setminus L^*$.
Thus
$$ |\cT| \le n(n-g_2) \le 12 |\cL|.$$
Each pair in $\cT$ spans a unique line, and each line $L$, other than $L^*$, contains
at least $(|L|-1)^2$ distinct pairs from $\cT$ (since we can choose $f\in L\setminus L^*$ arbitrarily and then choose $e\in L\setminus\{f\}$). 
Moreover 
$$ (|L|-1)^2 = 12(|L|-4) + (|L|-7)^2 \ge 12(|L|-4).$$
Therefore
$$ \sum_{L\in \cL\setminus\{L^*\}} |L|-4 \le \frac 1{12} \cT \le |\cL|.$$
Thus the average length of a line in $\cL\setminus\{L^*\}$ is at most $4+1=5$.
By considering the lines through an element off of the line $L^*$, we have $|\cL|>|L^*|$.
Therefore the line $L^*$ contributes at most $1$ to the average line-length.
\end{proof}

\section{Bounding the number of rank-$k$ flats}

In Section~2, we saw that bounding average line-length followed easily from a good lower-bound on the number of lines. Because we are bounding the average by a constant, it is important that the lower-bound is of the right order of magnitude. In this section, we consider the problem of finding good lower-bounds on the
number of rank-$k$ flats in real- and complex-representable matroids. Fortunately, this is a well-studied problem. Much of that work is founded on a result of Szemer\'edi and Trotter \cite{SzemerediTrotter} that bounds point-line incidences in real-representable matroids; their result was extended to complex-representable matroids by T\' oth \cite{Toth} and Zahl \cite{Zahl}, independently. 

The number of rank-$k$ flats in an $n$-element matroid is at most $n^k$. We begin by considering when this bound is of the right order-of-magnitude. For lines, this was solved by Beck \cite{Beck}; he stated the result for real-representable matroids, but his result can be obtained as a corollary to the Szemer\'edi-Trotter Theorem and therefore also works in the complex-representable setting.
\begin{beck}
For each real number $\epsilon>0$, there exists a real number $\gamma_2(\epsilon)$ such that, if
$M$ is a simple, complex-representable matroid with $n$ points, then either
$M$ has at least $\gamma_2(\epsilon)n^2$ lines, or $M$ has a set of elements with rank at most $2$ and size at least  $(1-\epsilon) n$.
\end{beck}

The right generalization of Beck's Theorem to rank-$k$ flats was found by Do \cite{Do}, using Lund's notion of ``essential dimension" \cite{Lund}. A similar notion had previously been introduced by Lov\' asz~\cite[Theorem ~2.3]{Lovasz} who, together with Yemini, found applications to bar-and-joint rigidity problems~\cite{LovYem}.

For an integer $k\ge 2$, we say that a simple matroid $M$ is {\em $k$-degenerate} if either it has rank at most $1$ or there there exist flats $(F_1,\ldots,F_t)$ in $M$, each with rank at least two, such that $F_1\cup\cdots\cup F_t=E(M)$ and
\begin{equation}\label{degenerate}
(r(F_1)-1)+\cdots+(r(F_t)-1) < k.
\end{equation}
So a matroid is $2$-degenerate if and only if it has rank at most $2$ and is $3$-degenerate if and only if it either has rank at most $3$ or it is the union of two skew lines.

Before stating Do's Theorem, we consider some of the properties of $k$-degenerate matroids. Let $M$ be a simple matroid and let $(F_1,\ldots,F_t)$ be flats, each with rank at least two, that cover $M$ and satisfy (\ref{degenerate}). We claim that:
\begin{itemize}
\item[(a)] for $1\le i<j\le t$, if $F_i$ and $F_j$ are not skew then $r(F_i\cup F_j)-1 \le (r(F_i)-1)+(r(F_j)-1)$,
\item[(b)] each rank-$k$ flat in $M$ spans one of the flats $F_1,\ldots,F_t$,
\item[(c)] $t< k$, and
\item[(d)] $r(M)\le 2(k-1)$.
\end{itemize}
Claim $(a)$ is immediate.
For $(b)$, consider a basis $B$ of a rank-$k$ flat $F$. By the pigeon-hole principle, there exists 
$i\in\{1,\ldots,t\}$ such that $F_i$ contains at least $r(F_i)$ elements of $B$. But then $F_i$ is spanned by $F$, as required. Claim $(c)$ follows from (\ref{degenerate}) since each of that terms $r(F_i)-1$ is at least $1$. For $(d)$ we have
\begin{align*}
k &\ge (r(F_1)-1)+\cdots+(r(F_t)-1) + 1\\
&\ge r(F_1\cup\cdots\cup F_t) - (t-1) \\
&\ge r(M) - (k-2),
\end{align*}
which gives $r(M)\le 2(k-1)$, as required.

By $(a)$, we may assume that the flats $(F_1,\ldots,F_t)$ are pair-wise skew and therefore that $(F_1,\ldots,F_t)$ is a partition of $E(M)$.
By $(b)$, the number of rank-$k$ flats in a $k$-degenerate matroid is at most $tn^{k-2}< kn^{k-2}$, which is $o(n^k)$, for fixed $k$.

Lund defines the {\em essential dimension} of a matroid to be the maximum $k$ such that $M$ is not $(k+1)$-degenerate. (The ``$+1$" here is due to the fact that geometers consider the dimension of flats whereas we are using their rank.)
Note that the essential dimension is at most $r(M)-1$
and, by $(d)$, is greater than $\frac 1 2 r(M)$. 

We say that a set $X$ of elements is {\em $k$-degenerate} if $M|X$ is $k$-degenerate.
\begin{Do}[\cite{Do}]
For each real number $\epsilon>0$ and integer $k\ge 2$, there exists a real number $\gamma_k(\epsilon)$ such that, if
$M$ is a simple, complex-representable matroid with $n$ points, then either
the number of rank-$k$ flats in $M$ is at least $\gamma_k(\epsilon)n^k$, or $M$ has a $k$-degenerate set of elements of size at least  $(1-\epsilon) n$.
\end{Do}
 
 We let $g_k(M)$, or simply $g_k$, denote the size of the largest $k$-degenerate set in $M$. For lines we saw that $|\cL| = \Theta(n(n-g_2)).$ For rank-$k$ flats  Lund showed that $|\cF_k|$ is $O(n(n-g_2)\cdots(n-g_k))$.
Lund~\cite{Lund} also showed that, for matroids that are not too close to being $k$-degenerate, this bound is of the right order of magnitude.
\begin{lund} 
For each integer $k\ge 2$ there is a real number $\rho_k>0$ and an integer $c_k\ge 0$ such that, if
$M$ is a simple, $n$-element, complex-representable matroid with $n-g_k>c_k$, then
$$ |\cF_k(M)| \ge \rho_k n(n-g_2)(n-g_3)\cdots (n-g_k).$$
\end{lund}

The condition that $n-g_k>c_k$ is necessary.
Consider the rank-$6$ matroid $M$ with $n=3(a+2)$ elements depicted in Figure~\ref{fig1}. We construct $M$ by $2$-summing three $(a+1)$-point lines onto the three points of a triangle of $M(K_4)$, where $a$ can be chosen arbitrarily large. The figure depicts an example with $a=3$; the matroid $M(K_4)$ is depicted by the six coplanar blue points. One can check that $g_2=a+1$, $g_3=2(a+1)$, $g_4=3(a+1)$, and $g_5=n-1$. Each hyperplane is forced to contain one of the three $(a+1)$-point lines, so the number of hyperplanes is only $O(n^2)$ but $n(n-g_2)\cdots(n-g_5)$ is $\Theta(n^3)$. This example is also of interest since the average hyperplane size is arbitrarily large.

\begin{figure}
\begin{tikzpicture}[scale=0.6]

    \node[fill=blue, circle, scale=0.5] at (2,3)(a){};
    \node[fill=blue, circle, scale=0.5] at (5,3)(b){};
    \node[fill=blue, circle, scale=0.5] at (8,3)(c){};
    
    \node[fill=blue, circle, scale=0.5] at (6,6)(d){};
    \node[fill=blue, circle, scale=0.5] at (4,4.5)(e){};

    \tkzInterLL(e,c)(b,d)       \tkzGetPoint{f}
    \node[fill=blue, circle, scale=0.5] at (f){};

    \draw[blue, thick] (a) -- (c) -- (e);
    \draw[blue, thick] (a) -- (d) -- (b);

    \node[fill=red, circle, scale=0.5] at (0,0)(x1){};
    \node[fill=red, circle, scale=0.5] at (.66,1)(x2){};
    \node[fill=red, circle, scale=0.5] at (1.34,2)(x3){};
    \draw[red, thick] (x1) -- (a);

    \node[fill=red, circle, scale=0.5] at (0,0)(a1){};
    \node[fill=red, circle, scale=0.5] at (.66,1)(a2){};
    \node[fill=red, circle, scale=0.5] at (1.34,2)(a3){};
    \draw[red, thick] (a1) -- (a);

    \node[fill=red, circle, scale=0.5] at (6,0)(b1){};
    \node[fill=red, circle, scale=0.5] at (5.67,1)(b2){};
    \node[fill=red, circle, scale=0.5] at (5.33,2)(b3){};
    \draw[red, thick] (b1) -- (b);

    \node[fill=red, circle, scale=0.5] at (10,0)(c1){};
    \node[fill=red, circle, scale=0.5] at (9.34,1)(c2){};
    \node[fill=red, circle, scale=0.5] at (8.67,2)(c3){};
    \draw[red, thick] (c1) -- (c);
\end{tikzpicture}
\caption{A rank-$6$ matroid}\label{fig1}
\end{figure}
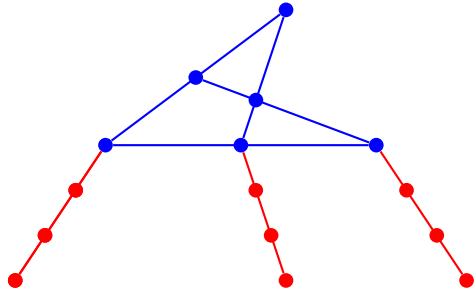

For planes however, we improve Lund's Theorem by proving that $c_3=0$. For ease of application, we prove a stronger-looking version of the result. The two versions are in fact equivalent, which follows from Lemma~\ref{strat1}.
\begin{theorem} \label{lund_planes}
There exists a real number $\varrho_3>0$ such that, if
$M$ is a simple, $n$-element, complex-representable matroid that is not $3$-degenerate, $X_3$ is a largest $3$-degenerate set in $M$, and $X_2$ is a largest $2$-degenerate subset of
$X_3$, then
$$ |\cP(M)| \ge \varrho_3 n(n-|X_2|)(n-|X_3|).$$
\end{theorem}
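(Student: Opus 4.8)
The plan is to reduce to rank $4$ and then split into a ``spread-out'' regime handled by Do's Theorem and a ``near-degenerate'' regime handled by contracting to rank $3$ and applying Lemma~\ref{lem:lines}. First I would truncate. Since rank-$3$ matroids are $3$-degenerate, $M$ has rank at least $4$, and truncating to rank $4$ (which preserves complex-representability over $\bC$) leaves every plane, every line, and every $3$-degenerate set unchanged, hence fixes $|\cP(M)|$, $|X_2|$, $|X_3|$ and the hypothesis that $M$ is not $3$-degenerate. So I assume $r(M)=4$ and set $C=E(M)\setminus X_3$, so that $n-|X_3|=|C|$. Fixing a small constant $\epsilon>0$: if $|C|\ge \epsilon n$ then $M$ has no $3$-degenerate set of size $(1-\epsilon)n$, so Do's Theorem gives $|\cP(M)|\ge \gamma_3(\epsilon)\,n^3$, and since $n(n-|X_2|)(n-|X_3|)\le n^3$ this already beats the target. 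The work is therefore confined to the regime $|C|<\epsilon n$, where $X_3$ is large; by properties (a)--(d) of $k$-degenerate sets, $M|X_3$ is either a plane $P_0$ or a union of two skew lines $L_1\cup L_2$.

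In this near-degenerate regime I would count planes through a point $c\in C$ by passing to the rank-$3$ contraction $\si(M/c)$, whose lines are exactly the planes of $M$ through $c$. A short rank computation in rank $4$ shows that $\si(M/c)$ has $\Theta(n)$ points (in the plane case exactly $g_3$, since every line through $c$ meets $P_0$ once; in the two-skew-lines case at least $g_3-1$, since at most one line through $c$ is a transversal of $L_1,L_2$), and that its longest line has length essentially $|X_2|$ (exactly $|X_2|$ in the plane case, and $|X_2|+O(1+|C|)$ in the two-skew-lines case). Applying Lemma~\ref{lem:lines} to $\si(M/c)$ then gives at least $\tfrac1{12}n_c(n_c-g_2(\si(M/c)))\gtrsim n(n-|X_2|)$ planes through $c$, provided $|C|$ is small relative to $n-|X_2|$.

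The difficulty is that these planes overlap across different choices of $c$, so summing over $c\in C$ does not directly produce the factor $(n-|X_3|)=|C|$. To recover it I would count only \emph{clean} planes, those meeting $C$ in the single point $c$: these correspond to lines of $\si(M/c)$ avoiding the at most $|C|-1$ points that are images of $C\setminus\{c\}$, and deleting the lines through those points costs at most $(|C|-1)\,n_c$. When $24|C|\le n-|X_2|$ this loss is a bounded fraction of the total, so each $c$ still contributes $\gtrsim n(n-|X_2|)$ clean planes; as a clean plane is counted for exactly one $c$, summing yields $|\cP(M)|\gtrsim |C|\,n(n-|X_2|)$, as required.

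The remaining and hardest case is $|C|<\epsilon n$ together with $n-|X_2|<24|C|$, which forces $|X_2|>(1-O(\epsilon))n$: almost every point lies on one long line $\ell^*$. Contracting a single point is now useless, because $\si(M/c)$ is itself a near-degenerate long line, so instead I would count planes spanned by a point of $\ell^*$ and a pair $\{e,e'\}$ off $\ell^*$ with $\overline{ee'}$ skew to $\ell^*$; each such skew pair produces at least $|X_2|-O(|C|)\gtrsim n$ planes meeting the off-set in exactly $\{e,e'\}$, and distinct pairs give distinct such planes. This reduces the target to showing that the number of skew pairs is $\gtrsim |C|^2$, that is, that the points off $\ell^*$ are not almost all concentrated in one plane through $\ell^*$. \textbf{This concentration control is the main obstacle}, and it is exactly where the maximality of $X_3$ enters: a plane through $\ell^*$ containing almost all the off-points would be a $3$-degenerate set larger than $X_3$ (via a short case analysis in the plane and two-skew-lines settings), contradicting the choice of $X_3$. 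Combining the three regimes and letting $\varrho_3$ be the smallest constant obtained (taken small enough to absorb the finitely many small cases, where $|\cP(M)|\ge 1$ because $M$ is not $3$-degenerate) would complete the proof.
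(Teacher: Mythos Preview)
Your overall architecture is sound and genuinely different from the paper's: after the common reduction via Do's Theorem, you split on the size of $n-|X_2|$ relative to $|C|$ and handle the ``spread'' regime by contracting a point $c\in C$ and applying Lemma~\ref{lem:lines} to $\si(M/c)$, whereas the paper splits on $|X_2|\ge \tfrac35 n$ versus $|X_2|<\tfrac35 n$ and uses principal truncation (Case~1) or Beck's Theorem / direct $3$-point-plane counting (Case~2). Your clean-planes argument is correct; the rank and longest-line computations in $\si(M/c)$ check out in both the plane and two-skew-lines subcases, and the subtraction of at most $(|C|-1)n_c$ lines through the images of $C\setminus\{c\}$ is legitimate.

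The gap is in your final regime $n-|X_2|<24|C|$. Your assertion that ``each such skew pair produces at least $|X_2|-O(|C|)$ planes meeting the off-set in exactly $\{e,e'\}$'' is only true when the line $\overline{ee'}$ has \emph{exactly two} points: if $|\overline{ee'}|\ge 3$ then every plane through $e,e'$ contains a third off-point and the count is zero. Your concentration argument (no plane through $\ell^*$ contains too many off-points, by maximality of $X_3$) correctly bounds the number of \emph{non-skew} pairs and yields $\gtrsim |C|^2$ skew pairs, but it says nothing about skew pairs lying on long skew lines. In the two-skew-lines case $X_3=X_2\cup L_2$ with $|L_2|\asymp |C|$, the $\binom{|L_2|}{2}\asymp |C|^2$ pairs inside $L_2$ are all skew yet all lie on the single $|L_2|$-point line $L_2$, so they contribute nothing to your count; you would need to argue separately that there remain $\gtrsim |C|^2$ two-point skew lines, and your sketch does not do this.

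The paper closes exactly this gap with a different idea: it principally truncates $X_2$ to a point, so that $N\setminus X_2$ is a simple rank-$3$ matroid on $n-|X_2|$ points whose lines are precisely the skew lines of $M$ together with the plane-classes $P\setminus X_2$. Your concentration bound becomes $g_2(N\setminus X_2)=|X_3|-|X_2|$, so Lemma~\ref{lem:lines} gives $\ge\tfrac1{12}(n-|X_2|)(n-|X_3|)$ lines in $N\setminus X_2$; the De~Bruijn--Erd\H os Theorem then shows at most half of these are plane-classes, leaving $\ge\tfrac1{24}(n-|X_2|)|C|$ genuine skew \emph{lines}, each of which yields $\gtrsim n$ planes as you intended. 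So your approach can be completed, but it needs this extra step (or an equivalent lower bound on the number of skew lines rather than skew pairs) that your proposal does not supply.
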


\begin{proof} 
We prove the result for 
\begin{align*}
     \varrho_3 = \min \left\{ \gamma_{3}(\epsilon), \, \frac{1}{120},\, \left( \gamma_{2}(1/4) -  \frac{\epsilon}{1-\epsilon} \right)(1- \epsilon)^{2} \right\},
\end{align*}
where
 $$0< \epsilon < \min\left\{ \frac 1{10}, \,\gamma_{2}(1/4)/(1+\gamma_{2}(1/4))\right\},$$ 
 and where $\gamma_2(1/4)$ and $\gamma_{3}(\epsilon)$ are given by Beck's Theorem and Do's Theorem respectively.
Let $Y_{3}:=E(M) \setminus X_{3}$ and $Y_{2}:=X_{3} \setminus X_{2}$. We may assume that $|\mathcal{P}(M)| < \gamma_{3}(\epsilon)n^{3}$. So by Do's Theorem, $|Y_{3}| < \epsilon n$.
Since $M$ is not $3$-degenerate, the rank of $M$ is at least $4$ and $M$ is not the union of two lines. By truncation, we may assume that $r(M)=4$. Note that $X_3$ is either a plane or the union of two skew lines and that $X_2$ is the longest line in $M|X_3$. 

{\flushleft \textbf{Case 1.} $|X_{2}| \geq \frac{3}{5}n$.}

Let $N$ be obtained from $M$ by principally truncating the line $X_{2}$ to a point. By choice of $(X_2,X_3)$,  the set $Y_{2}$ is a longest line in $N \setminus X_{2}$. By Lemma \ref{lem:lines}, we have $|\mathcal{L}(N \setminus X_{2})| \geq \frac 1{12}(n-|X_{2}|)(n-|X_{3}|)$. 
Note that at most $\frac{1}{2}|E(N \setminus X_{2})|$ of the lines in $N \setminus X_{2}$ span the point $X_{2}$ in $N$. Since $M$ is not the direct-sum of two lines, $N \setminus X_{2}$ has rank $3$ and therefore, by the De Bruijn-Erd\H{o}s Theorem (see Theorem~\ref{DB-E} or~\cite[Theorem 1]{DeBruijn}), has at least $|E(N \setminus X_{2})|$ lines. 
Thus at most half the lines of $N \setminus X_{2}$ span $X_{2}$ in $N$. It follows that there are at least $\frac{1}{24}(n-|X_{2}|)(n-|X_{3}|)$ lines in $M$ which are skew to $X_{2}$. Let $L$ be such a line, and let $X_{2}'$ be the set of points $e \in X_{2}$ such that $L \cup \{e\}$ is a plane in $M$. Then $|X_{2}'| \geq |X_{2}| - |E(M) \setminus X_{2}| \geq \frac{1}{5}n$. Hence the number of planes in $M$ is at least $\frac{1}{120}n(n-|X_{2}|)(n-|X_{3}|)$, which gives the result in this case.

{\flushleft \textbf{Case 2.} $|X_{2}| < \frac{3}{5}n$.}

Suppose first that $X_{3}$ is a plane. Observe that, since $|X_3|\ge (1-\epsilon)n$, we have
$|X_{2}| < \frac{3}{5(1- \epsilon)}|X_{3}| < \frac{3}{4}|X_{3}|.$
 So by Beck's Theorem, $|\mathcal{L}(M|X_{3})| \geq \gamma_{2}( \frac{1}{4})|X_{3}|^{2}$. Now consider an element $e \in Y_{3}$. In $M/e$, each element of $Y_{3} \setminus \{e\}$ is spanned by at most $|X_{3}|$ distinct lines of $M|X_{3}$.  Therefore at most $|X_{3}||Y_{3}| < \frac{\epsilon}{1-\epsilon}|X_{3}|^{2}$ lines of $M|X_{3}$ span a point of $Y_{3}$ in $M/e$. So at least $\Big( \gamma_{2}(\frac{1}{4}) - \frac{\epsilon}{1-\epsilon} \Big)|X_{3}|^{2}$ lines of $M|X_{3}$ span no elements of $Y_{3}$ in $M/e$. Hence
\begin{align*}
     |\mathcal{P(M))}| &\geq \Big( \gamma_{2}(1/4) -  \frac{\epsilon}{1-\epsilon} \Big)|X_{3}|^{2}(n-|X_{3}|) \\
     &> \Big( \gamma_{2}(1/4) -  \frac{\epsilon}{1-\epsilon} \Big)(1-\epsilon)^{2}n^2(n-|X_{3}|)\\
     &> \Big( \gamma_{2}(1/4) -  \frac{\epsilon}{1-\epsilon} \Big)(1-\epsilon)^{2}n(n-|X_{2}|)(n-|X_{3}|),
\end{align*}
as desired.

Now suppose that $X_{3}$ is the union of two skew lines, namely $X_2$ and $Y_2$. Observe that every plane in $M|X_{3}$ exactly consists of one of the lines $X_2$ and $Y_2$ together with a point from the other. For a fixed element $e\in Y_3$  we will consider the set $\cP_e$ of $3$-point planes in $M$ that contain $e$. Note that $Y_2\cup\{e\}$ spans a plane and that 
plane contains at most one point of $X_2$. Moreover, there are at most $|Y_3|-1$ lines in $M|Y_3$ that contain $e$ and each of these lines spans at most one point of $X_2$.
Therefore there is a set $X'_2\subseteq X_2$ with $|X'_2|\ge |X_2|-|Y_3|$ such that for each $x\in X'_2$ the set $\{e,x\}$ is a line of $M$ and that line is skew to $Y_2$. 
Fix an element $x\in X'_2$. By our choice of $X'_2$, the lines $\{e,x\}$ and $Y_2$ are skew, so there are at most $|Y_3|$ planes in $M$ that contain $\{e,x\}$ as well as some other element of $Y_3$, and each of these planes intersects the line $Y_2$ in at most one point. Therefore there are at least $|Y_2|-|Y_3|$ distinct planes in $\cP_e$ that contain $\{e,x\}$.
Hence $|\cP_e|\ge  |X'_2|\times (|Y_2|-|Y_3|)$ and
$$ |\cP| \ge |Y_3|\times (|X_2|-|Y_3|)\times (|Y_2|-|Y_3|).$$
Now $|Y_3|<\epsilon n\le \frac 1 {10} n$ and $|X_2|\ge |Y_2| \ge n - |X_2|-|Y_3| \ge \frac{3}{10}n$. Therefore $ |X_2|-|Y_3|\ge \frac 1 5 n$ and $|Y_2|-|Y_3|\ge \frac 1 5 n$.
Hence
\begin{align*}
     |\mathcal{P}(M)| &  \ge |Y_3|\times (|X_2|-|Y_3|)\times (|Y_2|-|Y_3|) \\
     &\geq \frac 1{25} n^2(n-|X_3|) \\
     &\geq \frac{1}{25}n(n - |X_{2}|)(n-|X_3|),
\end{align*}
as required.
\end{proof}

\section{Average plane size}

We now use Theorem~\ref{lund_planes} to prove Theorem~\ref{main_planes}, which we restate here for convenience.
This result follows directly from Lemma~\ref{strat2}, which is proved later, but we prove this special case directly since the proof for planes is simpler.
\begin{blah}[Theorem~\ref{main_planes}]
In a simple complex-representable matroid $M$ with rank at least $4$, the average plane-size is bounded above by an absolute constant, unless $M$ is the direct sum of two lines.
\end{blah}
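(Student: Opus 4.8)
The plan is to bound the total plane-size $\sum_{P\in\cP}|P|$ by an absolute constant times $|\cP|$, closely following the proof of Theorem~\ref{thm:lines}. I would first reduce to the essential case. Truncating $M$ to rank $4$ changes neither the set of planes nor their sizes, and one checks that the truncation is a direct sum of two lines exactly when $M$ is; so I may assume $r(M)=4$. A rank-$4$ matroid is $3$-degenerate precisely when it is a direct sum of two lines, so in the remaining case $M$ is not $3$-degenerate and Theorem~\ref{lund_planes} applies. Writing $N:=n(n-|X_2|)(n-|X_3|)$, this gives $|\cP|\ge\varrho_3 N$; moreover $n-|X_2|\ge n-|X_3|\ge 1$ since $X_2\subseteq X_3\subsetneq E(M)$. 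It therefore suffices to prove $\sum_{P\in\cP}|P|=O(N)$.

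For the main estimate, let $\cT$ be the set of rank-$3$ ordered triples $(e,f,g)$ of elements of $M$ with $f\notin X_2$ and $g\notin X_3$. As $e$, $f$ and $g$ range over sets of sizes at most $n$, $n-|X_2|$ and $n-|X_3|$, we have $|\cT|\le N$. Each triple of $\cT$ spans a unique plane, so $|\cT|=\sum_{P}t(P)$ where $t(P)$ is the number of triples of $\cT$ spanning $P$. Call $P$ \emph{good} if $t(P)\ge|P|$; then $\sum_{P\text{ good}}|P|\le|\cT|\le N$, and the crux is that only a controlled family of planes fails to be good. Fix a plane $P$ with $|P|$ large and $X_2\not\subseteq P$, so $|P\cap X_2|\le1$. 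If every line of $P$ has fewer than $\tfrac12|P|$ points then, since $P\cap X_3$ spans a line, $|P\setminus X_3|>\tfrac12|P|$; choosing $g\in P\setminus X_3$, then $f\in(P\setminus X_2)\setminus\{g\}$, then $e\in P$ off the line through $f$ and $g$, produces at least $\tfrac12|P|\cdot(|P|-2)\cdot\tfrac12|P|\ge|P|$ triples. If instead $P$ has a line $L$ with $|L|\ge\tfrac12|P|$ and $L\not\subseteq X_3$, then $L$ meets $X_3$ (hence $X_2$) in at most two points, and putting $g,f$ on $L$ and $e\in P\setminus L$ gives at least $(|P|-|L|)(|L|-2)^2\ge|P|$ triples. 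Either way $P$ is good.

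It follows that a plane which is not good must be \emph{small} ($|P|\le c_0$ for a suitable constant $c_0$), contain $X_2$, or be \emph{dominated} by a line $L\subseteq X_3$ with $|L|\ge\tfrac12|P|$. Small planes contribute at most $c_0|\cP|$. For the remaining two types I would use repeatedly that the planes through a fixed line $L$ partition $E(M)\setminus L$, so there are at most $n-|L|$ of them and their sizes sum to at most $|L|(n-|L|)+(n-|L|)\le 2|L|(n-|L|)$. Applied to $L=X_2$ this bounds the planes containing $X_2$ by $2|X_2|(n-|X_2|)\le 2N$.

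The main obstacle is the family of planes dominated by a line inside $X_3$, where the triple count is too small; these I would control by a separate double count. When $X_3$ is the union of two skew lines $A$ and $B$, the only rank-$2$ flats contained in $X_3$ are $A$ and $B$ themselves, so such a plane contains $A=X_2$ (already handled) or $B$; the latter contribute at most $2|B|(n-|B|)\le 2N$, using $|B|=|X_3|-|X_2|\le n-|X_2|$. When $X_3$ is a plane, I would group these planes by a longest line $L\subseteq X_3$ dominating them: each has a point off $X_3$, the planes through $L$ that meet $E(M)\setminus X_3$ number at most $n-|X_3|$, and each has size at most $2|L|$, so their total size is at most $2(n-|X_3|)\sum_{L\subseteq X_3}|L|$. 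To finish I would bound the point-line incidences inside $X_3$: isolating the longest line $X_2$ and using $\sum_{L\subseteq X_3}\binom{|L|}{2}=\binom{|X_3|}{2}$ together with $|L|\le 2\binom{|L|}{2}$ for the other lines yields $\sum_{L\subseteq X_3}|L|\le|X_2|+2|X_3|(|X_3|-|X_2|)$, so this contribution is $O(N)$ as well. Summing the good, small, $X_2$-containing and line-dominated contributions gives $\sum_{P}|P|=O(N)=O(|\cP|)$, as required.
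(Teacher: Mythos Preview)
Your approach is correct and rests on the same two pillars as the paper—Theorem~\ref{lund_planes} for $|\cP|\ge\varrho_3 N$ and the triple set $\cT$ for $\sum_P|P|=O(N)$—but your decomposition of $\cP$ is more laborious than necessary. The paper splits $\cP$ into $\cP_2$ (planes containing $X_2$), $\cP_3\setminus\cP_2$ (planes containing three independent points of $X_3$), and $\cP_1$ (the rest). The simplification you miss is that for every $P\in\cP_1$, \emph{every} basis of $P$ can be ordered as a $\cT$-triple: since $r(P\cap X_3)\le2$ and $r(P\cap X_2)\le1$, any basis has an element off $X_3$ (put it in the $g$-slot) and a second element off $X_2$ (put it in the $f$-slot). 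Basis exchange then yields $\sum_{\cP_1}(|P|-2)\le|\cT|$ directly, which eliminates your good/dominated dichotomy, the small-plane threshold $c_0$, and the incidence estimate $\sum_{L\subseteq X_3}|L|\le|X_2|+2|X_3|(|X_3|-|X_2|)$ that your dominated case needs. Your route reaches the same destination, just with an extra detour because you insist on $t(P)\ge|P|$ plane-by-plane rather than summing the weaker one-ordering-per-basis bound.

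Two small slips in your sketch, both harmless but worth patching in a full write-up: the claim ``$P\cap X_3$ spans a line'' can fail when $X_3=A\cup B$ is two skew lines and $P$ contains $B$ together with a point of $A$ (then $r(P\cap X_3)=3$), although the size bound $|P\cap X_3|\le|B|+1<\tfrac12|P|+1$ still goes through; and ``the only rank-$2$ flats contained in $X_3$ are $A$ and $B$'' overlooks the two-point lines $\{a,b\}$ with $a\in A$, $b\in B$, though this does not matter since a dominating line in a non-small plane must have at least three points.
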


\begin{proof} 
Let $X_3$ be the largest set consisting of either a plane or of the union of two lines and let 
$X_2$ be the largest line in $M|X_3$. By assumption, $X_3\neq E(M)$.
Let $\cP_2$ denote the set of all planes that contain $X_2$, and let $\cP_{3}$ denote the set of all planes that contain three independent elements from $X_3$. 

There are at most $n-|X_2|$ planes that contain the line $X_2$ and each of these planes has size at most $n$. So 
\begin{equation}\label{P1}
\sum_{P \in \mathcal{P}_{2}}|P| \leq n(n-|X_2|) \le n(n-|X_2|)(n-|X_3|).
\end{equation}

We claim that:
\begin{equation}\label{P2}
\sum_{P \in \mathcal{P}_{3}\setminus\cP_2}|P| \le 2n(n-|X_2|)(n-|X_3|).
\end{equation}
Now $X_3$ is either a plane or the direct sum of two lines. In the former case,
$\cP_2=\{X_3\}$, so
$$\sum_{P \in \mathcal{P}_{3}}|P| \leq n \le n(n-|X_2|)(n-|X_3|),$$
and hence the claim holds. Therefore we may assume that $X_3$ is the union of two lines, $X_2$ and $X_3\setminus X_2$. Each plane in $\cP_3\setminus \cP_2$ contains the line $X_3\setminus X_2$ and a point of $X_2$. Moreover each point in $E(M)\setminus (X_3\setminus X_2)$ is contained in at most one of these planes. Therefore
\begin{align*}
\sum_{P \in \mathcal{P}_{3}\setminus \cP_2}|P| &\leq |X_2|\times |X_3\setminus X_2| + |E(M)\setminus (X_3\setminus X_2)| \\ &\le n(n-|X_2|) + n \\
&\le 2 n(n-|X_2|)(n-|X_3|),
\end{align*}
proving~(\ref{P2}).

Now let $\cP_1$ denote the set of all planes other than those in $\cP_2\cup \cP_3$.
Each plane $P\in\cP_1$ contains at most one element from $X_2$ and at most a line from $X_3$.
Let $\cT$ denote the set of all triples 
$(x,y,z)$ with $z\in E(M)$, $y\in E(M)\setminus X_2$, and $x\in E(M)\setminus X_3$ 
such that $\{x,y,z\}$ spans a plane. Thus 
$|\cT|\le n(n-|X_2|)(n-|X_3|)$. Note that for each basis $B$ of a plane $P\in\cP_1$, we can order
the elements of $B$ to get a triple in $\cT$. Furthermore, by basis exchange, there are 
at least $|P|-2\ge\frac 1 3|P|$ bases of $P$.
Therefore
\begin{equation}\label{P3}
 \sum_{P\in\cP_1} |P|\le 3 |\cT| = 3 n(n-|X_2|)(n-|X_3|).
\end{equation}

By inequalities~(\ref{P1}),~(\ref{P2}), and~(\ref{P3}) and Theorem~\ref{lund_planes},
$$
\sum_{P\in\cP} |P|
\le 6n(n-|X_2|)(n-|X_3|) 
\le \frac{6}{\varrho_3} |\cP|,
$$
where $\varrho_3$ is given by Theorem~\ref{lund_planes}.
So the average plane-size is at most $\frac{6}{\varrho_3}$.
\end{proof}

\section{Stratification}
A {\em $k$-stratification} of a matroid $M$ is a sequence $(X_2,\ldots,X_k)$ of sets such that
$X_i$ is $i$-degenerate and $X_2\subseteq X_3\subseteq\cdots\subseteq X_k$. A $k$-stratification
$(X_2,\ldots,X_k)$ is {\em optimal} if $X_k$ is a largest $k$-degenerate set in $M$ and for each $i\in\{2,\ldots,k-1\}$, the set $X_{i}$ is a largest 
$i$-degenerate subset of $X_{i+1}$.
\begin{lemma}\label{strat1}
If  $(X_2,\ldots,X_k)$ is an optimal $k$-stratification of an $n$-element matroid, then,
for each $i\in \{2,\ldots,k\}$,
$$ n-|X_i| \le(k+1-i) (n-g_i).$$
\end{lemma}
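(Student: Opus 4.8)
The plan is to prove the bound by downward induction on $i$, starting from $i=k$ and decreasing to $i=2$, after first recording two easy monotonicity facts. First, since a matroid of rank at most $1$ is $j$-degenerate for every $j\ge 2$, and since a cover witnessing $i$-degeneracy also witnesses $(i+1)$-degeneracy (as $\sum_\ell(r(F_\ell)-1)<i<i+1$), every $i$-degenerate set is $(i+1)$-degenerate; hence $g_2\le g_3\le\cdots\le g_k$. Second, each $X_i$ is $i$-degenerate, so $|X_i|\le g_i$, with equality $|X_k|=g_k$ because $X_k$ is a largest $k$-degenerate set. This already gives the base case: $n-|X_k| = n-g_k = (k+1-k)(n-g_k)$.

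For the inductive step, I assume $n-|X_{i+1}|\le (k-i)(n-g_{i+1})$ and deduce the bound for $i$. Writing $n-|X_i| = (n-|X_{i+1}|) + (|X_{i+1}|-|X_i|)$ and using $g_{i+1}\ge g_i$, the inductive hypothesis supplies $n-|X_{i+1}|\le (k-i)(n-g_i)$, so it suffices to establish the single-step bound
\[
|X_{i+1}|-|X_i| \le n-g_i.
\]
To prove this, let $Z$ be a largest $i$-degenerate set of $M$, so that $|Z|=g_i$. The set $Z\cap X_{i+1}$ is a subset of $X_{i+1}$, and it is $i$-degenerate, being a subset of the $i$-degenerate set $Z$; since $X_i$ is a largest $i$-degenerate subset of $X_{i+1}$, we obtain $|X_i|\ge |Z\cap X_{i+1}| \ge |Z| - |E(M)\setminus X_{i+1}| = g_i-(n-|X_{i+1}|)$. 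Rearranging gives exactly $|X_{i+1}|-|X_i|\le n-g_i$, and combining the two bounds yields $n-|X_i|\le (k-i)(n-g_i)+(n-g_i)=(k+1-i)(n-g_i)$, completing the induction.

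The one non-formal ingredient, and the step I expect to be the main obstacle, is the claim used above that a subset of an $i$-degenerate set is again $i$-degenerate. I would prove this by induction on the number of deleted elements, reducing to showing that $M|(Z\setminus\{z\})$ is $i$-degenerate whenever $M|Z$ is. Here I would start from a cover $(F_1,\ldots,F_t)$ of $Z$ by rank-at-least-$2$ flats with $\sum_\ell(r(F_\ell)-1)<i$, which by claim $(a)$ of the excerpt may be taken pairwise skew, and replace each $F_\ell$ by the flat $F_\ell\setminus\{z\}$ of $M|(Z\setminus\{z\})$. Deleting $z$ either leaves the rank of its flat unchanged or drops it by one; the only delicate case is when $z$ lies on a two-point line $F_\ell=\{z,p\}$, which collapses to the single point $p$. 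In that case I would reabsorb $p$ by enlarging a skew companion flat $F_m$ to $\cl(F_m\cup\{p\})$ (closure taken in $M|(Z\setminus\{z\})$), whose rank is one more than that of $F_m$; skewness guarantees $p\notin\cl(F_m)$, so this merge trades the discarded contribution $r(F_\ell)-1=1$ for the increase in $F_m$, leaving the sum $\sum_\ell(r(F_\ell)-1)$ unchanged and still below $i$ (the boundary cases $r(M|(Z\setminus\{z\}))\le1$ or $t=1$ being immediate). Once this closure property is in hand, the induction above is routine.
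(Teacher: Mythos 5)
Your proof is correct and follows essentially the same route as the paper's: downward induction on $i$ with the trivial base case $|X_k|=g_k$, and the key inequality $|X_i|\ge|Z\cap X_{i+1}|\ge g_i-(n-|X_{i+1}|)$ obtained by intersecting a largest $i$-degenerate set $Z$ with $X_{i+1}$, combined with $g_{i+1}\ge g_i$. The only difference is that you supply a justification for the hereditary property of $i$-degeneracy (that $Z\cap X_{i+1}$ is again $i$-degenerate), which the paper asserts without comment; your one-element-at-a-time deletion argument, reabsorbing the orphaned point of a collapsed two-point line into a skew companion flat, is sound.
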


\begin{proof} The result is trivial when $i=k$. Suppose that $i<k$ and that 
$n-|X_{i+1}| \le (k-i) (n-g_{i+1})$. Let $Z$ be a largest $i$-degenerate set in $M$. Now $Z\cap X_{i+1}$ is also $i$-degenerate, so
$|X_i|\ge |Z\cap X_{i+1}|\ge |Z| - (n-|X_{i+1}|)$. Thus
\begin{align*}
n-|X_i| & \le (n-|Z|) + (n-|X_{i+1}|)\\
&\le (n-g_i) +(k-i) (n-g_{i+1})\\
&\le (n-g_i)+ (k-i)(n-g_i)\\
&= (k-i+1)(n-g_i),
\end{align*}
as required. 
\end{proof}

We recall that the {\em essential dimension} of a matroid $M$ is the maximum value of $k$ such that $M$ is not $k$-degenerate. Note that, an $n$-element matroid is not $k$-degenerate if and only if $n-g_k(M)>0$.
We can now prove our main technical lemma.
\begin{lemma}\label{strat2}
For an integer $k\ge 2$, if $(X_2,X_3,\ldots,X_k)$ is an optimal $k$-stratification in a simple, $n$-element complex-representable matroid $M$ with essential dimension $d$, then
$$ \sum_{F\in\cF_k} (|F|-k)  \le 2^{k(k-1)}(n-|X_1|)(n-|X_2|)\cdots (n-|X_d|),$$
where $X_{1}$ is defined to be the empty set.
\end{lemma}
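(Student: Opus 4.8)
The plan is to induct on $k$, the backbone being a division of $\cF_k(M)$ according to how each flat meets the filtration $\emptyset=X_1\subseteq X_2\subseteq\cdots\subseteq X_d\subsetneq X_{d+1}=\cdots=X_k=E(M)$, where $d$ is the essential dimension (so that the product in the statement is exactly $\prod_{i=1}^{d}(n-|X_i|)$, the later factors vanishing). Note that for the statement to be meaningful we need $d\le k$, i.e. $M$ is $(k+1)$-degenerate; this is the setting in which the lemma is applied. For the base case $k=2$ I would argue directly: every line other than $X_2$ meets $X_2$ in at most one point, so counting ordered pairs $(e_1,e_2)$ with $e_1\in E(M)\setminus X_2$ that span a common line gives $\sum_{L\in\cL}(|L|-2)\le 2n(n-|X_2|)$, comfortably inside $2^{2}(n-|X_1|)(n-|X_2|)$.

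Call a rank-$k$ flat $F$ \emph{generic} if $r(F\cap X_i)\le i-1$ for every $i\in\{2,\ldots,d\}$. The generic flats are the clean part and generalise the set $\cP_1$ from the proof of Theorem~\ref{main_planes}. Since any basis $B$ of $F$ satisfies $|B\cap X_i|\le r(F\cap X_i)\le i-1$, a Hall/SDR argument (the Hall condition for matching $B$ into the $k$ positions subject to ``$j$-th coordinate avoids $X_{k+1-j}$'' reduces exactly to these inequalities) shows that every basis of a generic flat can be ordered into the set $\cT$ of $k$-tuples whose $j$-th entry lies off $X_{k+1-j}$. As $|\cT|\le\prod_{i=1}^{k}(n-|X_i|)$, a simple rank-$k$ flat has at least $|F|-k+1$ bases, and distinct flats give disjoint tuple-families (a tuple determines its span), we get $\sum_{F\text{ generic}}(|F|-k)\le|\cT|\le\prod_{i=1}^{d}(n-|X_i|)$ when $d=k$. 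When $d<k$ the product $\prod_{i=1}^k(n-|X_i|)$ already vanishes and, as explained next, there are no generic flats to worry about.

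For the non-generic flats I would split into two regimes. If $M$ is not $k$-degenerate (so $d=k$), a non-generic $F$ has $r(F\cap X_i)\ge i$ for some smallest $i\le d$; pigeonholing over the pairwise-skew blocks $C_1,\ldots,C_s$ of the $i$-degenerate set $X_i$ (using $\sum_j(r(C_j)-1)<i$) forces $F$ to contain a fixed flat $B=\cl(C_j)$ of rank at least two. The flats through $B$ are the rank-$(k-r(B))$ flats of $M/B$, so I would apply the inductive hypothesis to $M/B$ and sum over the at most $\binom{k}{2}$ blocks $B$ arising across all levels. If instead $M$ is $k$-degenerate ($d<k$), every flat (and in particular there are no generic ones) decomposes additively along a global pairwise-skew block decomposition $E(M)=B_1\sqcup\cdots\sqcup B_t$ with each $r(B_l)<k$, via $|F|-k=\sum_l(|F\cap B_l|-r(F\cap B_l))$; summing over rank-profiles, applying the inductive hypothesis to each $M|B_l$, and bounding the remaining factors by Lund's upper bound on the number of flats, again reduces to lower rank. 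In either regime Lemma~\ref{strat1} and the nesting $B\subseteq X_i\subseteq X_d$ are what let me bound the lower-rank products by $\prod_{i=1}^{d}(n-|X_i|)$; tallying one unit from the generic count against the at-most-$\binom{k}{2}$ blocks, each carrying the inductive constant $2^{(k-1)(k-2)}$, is arranged to land under $2^{k(k-1)}=4^{k-1}\cdot 2^{(k-1)(k-2)}$.

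The step I expect to be the main obstacle is the non-generic analysis when $M$ is \emph{not} $k$-degenerate, because contraction does not preserve point-counts: a single parallel class of $M/B$, namely a flat $\cl(B\cup\{q\})$, can contain many points of $F$, so $\sum_{F\supseteq B}(|F|-k)$ is \emph{not} controlled by $\sum_{F'\in\cF_{k-r(B)}(M/B)}(|F'|-(k-r(B)))$ alone. Handling this honestly requires a finer recursion that separately accounts for the extra collinear points over $B$ (organised, in turn, by lower-rank flats of the contractions $M/\cl(B\cup\{q\})$), rather than the crude estimate $|F|\le n$ used for planes; this crude estimate is exactly what fails when some $|X_i|$ is close to $n$. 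Marrying this point-count recursion with the bookkeeping that embeds the optimal stratification of each minor into $(X_2,\ldots,X_k)$ via Lemma~\ref{strat1} is where the real work lies, the generic count, base case, and constant-tally being routine by comparison.
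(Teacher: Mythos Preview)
Your treatment of the generic flats (what the paper calls $\cH_0$) matches the paper's: every basis of such an $F$ lies in the tuple-set $\cT_k$, and basis exchange gives at least $|F|-k$ bases, so $\sum_{F\in\cH_0}(|F|-k)\le|\cT_k|\le\prod_{i\le d}(n-|X_i|)$. The divergence, and the gap you yourself flag, is in the non-generic case. Your plan is to contract a block $B$ and induct on $k$, and you have correctly identified why this stalls: $|F|-k$ does not descend to $M\con B$, because many points of $F$ can collapse into a single parallel class over $B$.

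The paper sidesteps this by avoiding induction entirely. The device is to classify non-generic flats not by a single block, but by the \emph{maximal} union of blocks they contain. Let $\cW$ be the collection of all unions of nonempty subfamilies of $\cP_2\cup\cdots\cup\cP_k$, so $|\cW|<2^{k(k-1)}$. Each non-generic $F$ contains some $W\in\cW$; place $F$ in $\cH_W$ for an inclusion-maximal such $W$, and set $a=k-r(W)$. Now count pairs $(e,I)$ with $e\in F\cap\cl(W)$ and $I$ an $a$-element basis of $F\setminus\cl(W)$ in $M\con W$. The maximality of $W$ is exactly what forces $I\cup\{e\}\in\cT_{a+1}$: if $|(I\cup\{e\})\cap X_i|\ge i$ for some $i$, pigeonhole gives a block $P\in\cP_i$ spanned by $I\cup\{e\}$, hence $P\subseteq F$, hence by maximality $P\subseteq W$; but then at least $r(P)\ge 2$ points of $I\cup\{e\}$ lie in $\cl(W)$, contradicting $|I\cap\cl(W)|=0$. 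A two-line expansion of $|F\cap\cl(W)|\cdot(|F\setminus\cl(W)|-a+1)$ shows there are at least $|F|-k$ such pairs, and distinct $F$ give disjoint families, so $\sum_{F\in\cH_W}(|F|-k)\le|\cT_{a+1}|\le|\cT_d|$ for every $W$. Summing over $\cW$ produces the factor $2^{k(k-1)}$.

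The single extra coordinate $e\in F\cap\cl(W)$ is precisely what absorbs the ``collinear excess'' over $W$ that broke your contraction argument; it keeps the count in $M$ rather than pushing it to a minor. Your recursive scheme might be salvageable, but it would require controlling, for each block $B$ and each rank-one extension $\cl(B\cup\{q\})$, the contribution of flats whose intersection with $\cl(B)$ concentrates there, and then re-embedding the stratifications of all these minors into $(X_2,\ldots,X_k)$ via Lemma~\ref{strat1}. The paper's direct count shows this entire detour is unnecessary.
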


\begin{proof}
For notational convenience we let $X_{k+1}=E(M)$.
 We let $\cT_i$ denote the set of $i$-element independent sets $I$ such that
$|X_j\cap I|<j$ for each $j\in \{1,2,\ldots,k\}$.

\begin{claim} \label{T}
For each $j\in\{1,\ldots,k\}$, we have
$$|\cT_j|\le (n-|X_1|)(n-|X_2|)\cdots (n-|X_j|).$$
\end{claim}

\begin{proof}[Proof of claim.]
Fix an ordering $(x_1,\ldots,x_n)$ of $E(M)$ refining the partial order $(X_2\setminus X_1,X_3\setminus X_2,\ldots,X_{k+1}\setminus X_k)$.
Consider an ordering $\{e_1,\ldots,e_j\}$ of a set $I\in \cT_j$ respecting the order 
$(x_1,\ldots,x_n)$.
For each $i\in \{1,\ldots,j\}$ we have $|X_i\cap I|<i$, so $e_i\in E(M)\setminus X_i$. Now the result follows immediately.
\end{proof} 

We let $\cH_0$ denote the set of flats $F\in \cF_k$ such that $r(F\cap X_i)< i$ for all $i\in\{2,\ldots,k\}$.
The set $\cH_0$ typically contains most of the flats in $\cF_k$ and it is easy to bound the sum of the sizes of these flats.

\begin{claim} \quad
$\sum_{F\in \cH_0}(|F|-k) \le |\cT_d|.$
\end{claim}

\begin{proof}[Proof of claim.] We may assume that $d\geq k$, since otherwise $\cH_0$ is empty.
By basis exchange, each flat $F\in \cH_0$ has at least $|F|-k$ distinct bases, each in $\cT_k$. The claim follows since rank-$k$ independent sets span unique flats.
\end{proof}

Bounding the sum of the sizes of the other flats is a bit harder.
Consider $i\in\{2,\ldots,k\}$. Since $X_i$ is $i$-degenerate, there is a partition $\cP_i$ of $X_i$ into flats, each with rank at least $2$, such that
$$ \sum_{F\in \cP_i} r(F)-1 \le i-1.$$
Note that $|\cP_i|\le i\le k$. Let $\cW$ be the collection of all subsets of $E(M)$ obtained by taking the union of sets in each non-empty subcollection of $\cP_2\cup\cdots\cup\cP_k$.
Since $|\cP_2\cup\cdots\cup\cP_k|<k(k-1)$, we have $|\cW|\le 2^{k(k-1)}-1$.

\begin{claim} 
Each flat in $F\in \cF_k\setminus \cH_0$ contains a set from $\cW$.
\end{claim}

\begin{proof}[Proof of claim.]
Since $F\not\in \cH_0$, we have $r(F\cap X_i)\ge i$ for some $i\in \{2,\ldots, k\}$. Let $I$ be an $i$-element independent subset of $F\cap X_i$.
Since $|I|>\sum_{P\in \cP_i} r(P)-1$, there is a set $P\in \cP_i$ with $|I\cap P|\ge r(P)$. Therefore $P\subseteq F$. Moreover, by construction, $P$ is contained in $\cW$.
\end{proof}

For each set $W\in \cW$, we let $\cH_W$ denote the collection of all flats in $\cF_k\setminus \cH_0$ that contain the set $W$ but that do not contain any larger set in $\cW$.
\begin{claim}\label{counting_indep}
For each $W\in \cW$ and $F\in \cH_W$, if $I$ is a maximum independent subset of $F$ in $M\con W$ and $e\in\cl(W)$, then
$I\cup\{e\}\in \cT_{a+1}$, where $a=k-r(W)$. Moreover $a+1\le d$.
\end{claim}

\begin{proof}[Proof of claim.]
Note that $|I|= r(F) -r(W) = k-r(W)=a$ and since $I$ is independent in $M\con W$, the sets $I$ and $W$ are skew in $M$, so
$I\cup\{e\}$ is an $(a+1)$-element independent set in $M$. Suppose for a contradiction that $I\cup\{e\}\not\in \cT_{a+1}$, then
there exists $i\in\{2,\ldots,k\}$ such that $|(I\cup\{e\})\cap X_i|\ge i$. Then there is a set $P\in \cP_i$ such that $|(I\cup\{e\})\cap P|\ge r(P)$.
Therefore $P \subseteq F$. But then, by definition of $\cH_{W}$, $P \subseteq W$. Since $r(P)\ge 2$, there are at least two elements of $W$ in $I\cup\{e\}$, contrary to the definition of $I$.
\end{proof}

\begin{claim} For each $W\in \cW$, 
$$\sum_{F\in \cH_W}(|F|-k) \le n(n-|X_{2}|) \ldots (n-|X_{d}|).$$
\end{claim}

\begin{proof}[Proof of claim.]
We may assume that $\cH_{W} \neq \emptyset$. Therefore, by Claim \ref{counting_indep}, $a+1 \leq d$. If $r(W)\ge k$ we have $|\cH_W|\le 1$ and, hence, by Claim~\ref{T},
$$
\sum_{F\in \cH_W} |F|  \le n\le |\cT_k| + k|\cH_W|,
$$
as required. So we may assume that $r(W)< k$. Let $a=k-r(W)$.
For a flat $F\in  \cH_W$ we let $\cI_F$ denote the collection of all sets $I\in\cT_{a+1}$ such that $I\cup W$ spans $F$ and $|I\cap \cl(W)|=1$.
Note that for distinct flats $F,F'\in \cH_W$ the sets $\cI_F$ and $\cI_{F'}$ are disjoint. Moreover, for $F\in \cH_W$, we have
$|\cI_F| \geq |F\cap \cl(W)|\times b$, where $b$ is the number of bases of $F\setminus \cl(W)$ in $M\con W$. By basis exchange,
$b\ge |F\setminus \cl(W)|-a+1$. Therefore
\begin{align*}
 |\cI_F| &\ge |F\cap\cl(W)| \times(|F\setminus\cl(W)|-a+1)\\
 &= (|F\cap\cl(W)|-1) \times(|F\setminus\cl(W)|-a) + \\
 &\quad |F\setminus\cl(W)| + (|F\cap\cl(W)|-a+1) -1\\
 &\ge |F|-k.
 \end{align*}
 Hence
$$
\sum_{F\in \cH_W} (|F|-k)  \le |\cT_{a+1}| \le |\cT_d| \leq n(n-|X_{2}|) \ldots (n-|X_{d}|), 
$$
as required.
\end{proof}

Combining the above bounds gives:
\begin{align*}
\sum_{F\in\cF_k} (|F|-k) &\leq \sum_{F\in \cH_0} (|F|-k)+ \sum_{W \in \cW} \sum_{F\in \cH_W} (|F|-k)\\
&\le |\cT_d| +(2^{k(k-1)}-1)n(n-|X_{2}|) \ldots (n-|X_{d}|)  \\
&\le 2^{k(k-1)}n(n-|X_2|)\cdots (n-|X_d|),
 \end{align*}
as required.
\end{proof}

\section{The average size of rank-$k$ flats}

In this section we prove Theorem~\ref{main_flats}. Later, in Section \ref{top-heavy},
we will give an alternative (and shorter) proof by deriving the result as a consequence of Theorem~\ref{thm_flats} and results of Huh and Wang.

We use the following extension of Lund's Theorem.
\begin{theorem} \label{strat3}
For each integer $k\ge 2$, there is a real number $\sigma_k>0$ such that, if
$M$ is a simple, $n$-element, complex-representable matroid with rank at least $2k-1$, and $(X_2,\ldots,X_k)$ is an 
optimal $k$-stratification of $M$, then
$$ |\cF_k(M)| \ge \sigma_k n(n-|X_2|)(n-|X_3|)\cdots (n-|X_k|).$$
\end{theorem}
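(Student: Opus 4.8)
The plan is to induct on $k$. The base case $k=2$ is Lemma~\ref{lem:lines}, since there $X_2$ is a longest line and $|\cL|\ge\frac1{12}n(n-|X_2|)$. For the inductive step I would first truncate $M$ to rank exactly $2k-1$; truncation preserves complex-representability, simplicity, and all flats of rank below $2k-1$ (in particular the rank-$k$ flats and the entire stratification, whose sets have rank at most $2k-2$), so we may assume $r(M)=2k-1$. Writing $g_i$ for the size of a largest $i$-degenerate set, Lemma~\ref{strat1} gives $n-|X_i|\le(k+1-i)(n-g_i)$, so $n(n-|X_2|)\cdots(n-|X_k|)$ and $n(n-g_2)\cdots(n-g_k)$ agree up to a factor depending only on $k$; hence if $n-g_k$ exceeds the constant $c_k$ of Lund's Theorem, the bound is immediate. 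This isolates the one real case: $0<n-g_k\le c_k$, so that all but boundedly many points of $M$ lie in $X_k$.

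For that degenerate case I would use the rank hypothesis directly. Since $X_k$ is $k$-degenerate its rank is at most $2k-2<r(M)$, so $\cl(X_k)$ has positive corank; fix $s'$ points $W=\{e_1,\dots,e_{s'}\}$ outside $\cl(X_k)$ that are independent over $\cl(X_k)$. Setting $M':=M\mid\cl(X_k)$ and $j:=k-s'$, the map $G\mapsto\cl(G\cup W)$ sends rank-$j$ flats of $M'$ to rank-$k$ flats of $M$, and it is injective: submodularity gives $r(\cl(G\cup W)\cap\cl(X_k))\le k-s'=j$, so this intersection is a rank-$j$ flat of $M'$ containing $G$ and therefore equal to $G$, recovering $G$. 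Thus $|\cF_k(M)|\ge|\cF_j(M')|$. I would take $s'=k-e'$, where $e'$ is the essential dimension of $M\mid X_k$; then $X_i=X_k$ for every $i>e'$, so each of the product-factors $n-|X_{e'+1}|,\dots,n-|X_k|$ discarded by the reduction equals $n-|X_k|\le c_k$ and is absorbed into the final constant, while $M'$ is complex-representable of strictly smaller rank, and its largest $j$-degenerate set and lower strata agree with $X_j,\dots,X_2$ up to the at most $c_k$ deleted points. One then bounds $|\cF_{e'}(M')|$ either from Lund's Theorem applied to $M'$ or by recursing on $M'$.

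The recursion terminates because rank strictly decreases at each step, and this is exactly the point at which complex-representability is indispensable: the configuration that would break the argument --- a low-rank restriction carrying a dense, projective-geometry-like flat with far too few rank-$j$ flats --- does not exist over $\bC$, because the incidence bounds behind Lund's Theorem force $M'$ to have the expected number of flats. The main obstacle is therefore the bookkeeping rather than any single inequality: I must follow how the product $n(n-|X_2|)\cdots(n-|X_k|)$ transforms on passing to $M'$ and deleting the top levels, check that only bounded factors are ever lost, keep the accumulating multiplicative constants under control through the bounded-depth recursion, and match the stratification of $M'$ to that of $M$ across the deleted points. The genuinely delicate step is making the choice $j=e'$ compatible with the hypothesis $r(M')\ge 2j-1$ needed to continue; here the structural inequality $r(X_k)\le 2e'-1$ relating the rank and essential dimension of $M\mid X_k$, together with complex-representability, is what must be exploited to keep the induction alive.
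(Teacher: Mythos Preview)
Your high-level framework---base case via Lemma~\ref{lem:lines}, apply Lund's Theorem when $n-g_k>c_k$, and otherwise recurse on a restriction containing $X_k$---matches the paper's. The divergence, and the gap, is in the recursive step. You restrict to $M'=M|\cl(X_k)$ and attempt to jump from $k$ all the way down to $j=e'$, the essential dimension of $M|X_k$; the paper instead restricts to a \emph{hyperplane} $H\supseteq X_k$ (which exists because $r(X_k)\le 2(k-1)<r(M)$) and reduces $k$ only to $k-1$. The paper's choice is what makes the induction close: it guarantees $r(M|H)=r(M)-1\ge 2(k-1)-1$, so the inductive hypothesis applies directly to $N=M|H$ with parameter $k-1$, and only the single factor $n-|X_k|\le c_k$ needs to be absorbed.

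Your choice does not close. To recurse on $M'$ with parameter $j=e'$ you need $r(M')\ge 2e'-1$, but $r(M')=r(X_k)$, and the structural inequality you invoke, $r(X_k)\le 2e'-1$, points the \emph{wrong way}: it is an upper bound on $r(X_k)$, not a lower one. In fact $r(X_k)$ can be as small as $e'+1$ (for instance when $X_k$ is a flat in general position), and then $r(M')=e'+1<2e'-1$ once $e'\ge 3$, so the inductive hypothesis is unavailable. Nor can you fall back on Lund's Theorem for $M'$, since nothing forces $|E(M')|-g_{e'}(M')>c_{e'}$. There is also an off-by-one in the claim ``$X_i=X_k$ for every $i>e'$'': under the paper's convention $M|X_k$ is not $(e'+1)$-degenerate, so $X_{e'+1}\subsetneq X_k$ and the factor $n-|X_{e'+1}|$ is \emph{not} bounded by $c_k$. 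The remedy is exactly the paper's move: restrict to a hyperplane rather than to $\cl(X_k)$, and step down one level at a time.
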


\begin{proof} By Lemma~\ref{lem:lines}, the result holds for $k=2$. For any given $\kappa\ge 3$ we assume that the result 
holds for $k=\kappa-1$ and prove the result for $k=\kappa$ with
$$ 
\sigma_{\kappa} = \min\left(\frac{\rho_{\kappa}}{\kappa!},\, c_{\kappa}^{-2\kappa},\, 
\frac{\sigma_{\kappa-1}}{(\kappa-1)!c_{\kappa}^2(c_{\kappa}+1)^{\kappa-2}}\right), 
$$
where $\rho_{\kappa}$ and $c_{\kappa}$ are given by Lund's Theorem.

Note that $|X_{\kappa}|=g_{\kappa}(M)$. Therefore,
if $n-|X_{\kappa}|>c_{\kappa}$, then by Lund's Theorem and Lemma~\ref{strat1}, we have
$$ |\cF_k(M)| \ge \frac{\rho_{\kappa}}{\kappa!} n(n-|X_2|)(n-|X_3|)\cdots (n-|X_{\kappa}|),$$
as required. Thus we may assume that $n-|X_{\kappa}|\le c_{\kappa}$; that is
\begin{equation}\label{eq1}
1 \ge \frac {n-|X_{\kappa}|}{c_{\kappa}}.
\end{equation}
We recall that $r(X_k)\le 2(k-1) < r(M)$, so there is a hyperplane $H$ of $M$ that contains $X_k$.
Let $N=M|H$ and $m=|H|$ and let $(Y_2,\ldots,Y_{\kappa-1})$ be an optimal $(\kappa-1)$-stratification in $N$. 
Note that $m\ge n-c_\kappa$ and $r(N) = r(M)-1 > 2(k-1)-1.$
Therefore, by our induction hypothesis and Lemma \ref{strat1},
\begin{align}
 |\cF_{\kappa-1}(N)| &\ge \sigma_{\kappa-1} m(m-|Y_2|)\cdots (m-|Y_{\kappa-1}|)  \\
 &\ge \sigma_{\kappa-1} m(m-g_2(N))\cdots (m-g_{\kappa-1}(N)) \nonumber\\
 &\ge \frac{\sigma_{\kappa-1}}{(\kappa-1)!} m(m-|X_2|)\cdots (m-|X_{\kappa-1}|). \nonumber
\end{align}

Fix an element $e\in E(M)\setminus H$. Since we can extend each flat in $\cF_{\kappa-1}(N)$ to a flat in
$\cF_{\kappa}(M)$ that contains $e$, we have
\begin{equation}\label{eq3}
 |\cF_{\kappa}(M)| \ge|\cF_{\kappa-1}(N)|.
\end{equation}
Let $c=n-|X_{\kappa}|$, thus $c\le c_\kappa$.
For each $i\in\{2,\ldots,\kappa-1\}$ we have $|X_i|\le |X_\kappa|\le n-c$, thus
\begin{align*}
 (c_{\kappa} +1)(m-|X_i|) &\ge (c+1)(m-|X_i|)\\
&\ge n-|X_i| + c (n-|X_i|) - c^2 \\
&\ge n-|X_i|.
\end{align*}
That is
\begin{equation}\label{eq4}
m-|X_i|\ge \frac {n-|X_i|}{c_{\kappa} +1}.
\end{equation}

We may assume that $n\ge c_{\kappa}^2$, since otherwise $\sigma_{\kappa}n^{\kappa}\le 1$ and the result trivially holds. But then 
\begin{equation}\label{eq1}
m \ge \frac n{c_{\kappa}}.
\end{equation}

Therefore, combining all of the above-labelled inequalities
\begin{align*}
|\cF_{\kappa}(M)| &\ge|\cF_{\kappa-1}(N)| \\
&\ge \frac{\sigma_{\kappa-1}}{(\kappa-1)!} m(m-|X_2|)\cdots (m-|X_{\kappa-1}|) \\
&\ge \frac{\sigma_{\kappa-1}}{(\kappa-1)!c_{\kappa}^2(c_{\kappa}+1)^{\kappa-2}}n(n-|X_2|)\cdots (n-|X_{\kappa-1}|)(n-|X_{\kappa}|),
\end{align*}
as required.
\end{proof}

Finally, we can prove Theorem~\ref{main_flats} which we restate here for convenience.
\begin{blah}[Theorem~\ref{main_flats}]
For an integer $k\ge 2$, in a simple complex-representable matroid with rank at least $2k-1$, the average size of a rank-$k$ flat is bounded above by a constant depending only on $k$.
\end{blah}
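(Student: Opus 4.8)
The plan is to combine the two key ingredients that the paper has already assembled: the upper bound on the total excess flat-size from Lemma~\ref{strat2}, and the lower bound on the number of rank-$k$ flats from Theorem~\ref{strat3}. Both of these are stated in terms of the same quantity, namely the product $n(n-|X_2|)(n-|X_3|)\cdots(n-|X_k|)$ associated to an optimal $k$-stratification $(X_2,\ldots,X_k)$. Since the average rank-$k$ flat-size is $\frac{1}{|\cF_k|}\sum_{F\in\cF_k}|F|$, the strategy is to write this sum as $\sum_{F\in\cF_k}(|F|-k) + k|\cF_k|$, bound the first term from above and $|\cF_k|$ from below, and divide.

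First I would fix a simple, $n$-element, complex-representable matroid $M$ of rank at least $2k-1$ and choose an optimal $k$-stratification $(X_2,\ldots,X_k)$. Let $d$ be the essential dimension of $M$; since $r(M)\ge 2k-1$ and (by claim~$(d)$ of the $k$-degeneracy discussion) the essential dimension exceeds $\tfrac12 r(M)$, we have $d\ge k$, so $M$ is not $k$-degenerate and $\cF_k$ is genuinely governed by Lund-type bounds. Applying Lemma~\ref{strat2} gives
\begin{equation*}
\sum_{F\in\cF_k}(|F|-k)\le 2^{k(k-1)}\,n(n-|X_2|)\cdots(n-|X_d|).
\end{equation*}
The one subtlety here is that Lemma~\ref{strat2} produces a product running up to $n-|X_d|$ where $d$ may exceed $k$, whereas Theorem~\ref{strat3} produces a product running only up to $n-|X_k|$. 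Since the sets are nested, $n-|X_i|\le n$ for every $i$, and in particular the extra factors $(n-|X_{k+1}|)\cdots(n-|X_d|)$ are each at most $n$; I would absorb them by bounding $n(n-|X_2|)\cdots(n-|X_d|)\le n^{d-k}\cdot n(n-|X_2|)\cdots(n-|X_k|)$, or more carefully I would note that these two products differ by a factor that is bounded in terms of $k$ alone once one observes $d\le 2(k-1)$ and each truncated factor is between a constant and $n$.

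Next I would invoke Theorem~\ref{strat3}, which applies precisely because $r(M)\ge 2k-1$, to obtain
\begin{equation*}
|\cF_k(M)|\ge \sigma_k\, n(n-|X_2|)\cdots(n-|X_k|).
\end{equation*}
Dividing the upper bound on $\sum_{F}(|F|-k)$ by this lower bound on $|\cF_k|$, the common product $n(n-|X_2|)\cdots(n-|X_k|)$ cancels, leaving a ratio bounded by a constant depending only on $k$ (namely $2^{k(k-1)}/\sigma_k$ up to the factor accounting for the extra $X_i$'s discussed above). Hence the average of $|F|-k$ over $\cF_k$ is at most a constant $C_k$, and the average flat-size is at most $C_k+k$, as desired.

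The step I expect to be the main obstacle is the mismatch between the two products: Lemma~\ref{strat2} ranges over $i\le d$ while Theorem~\ref{strat3} ranges over $i\le k$. Reconciling them cleanly requires controlling how much the ``tail'' factors $(n-|X_{k+1}|),\ldots,(n-|X_d|)$ can inflate the upper bound relative to the lower bound. Because $d\le 2(k-1)$ is bounded in terms of $k$ and each factor is at most $n$, the inflation is at worst a factor of $n^{d-k}$; the cleanest fix is to restrict attention to the quotient so that the matching factors cancel and the surplus factors are absorbed into the $k$-dependent constant, which is all that is required since we only claim a bound depending on $k$. I would double-check that the optimal $k$-stratification and the optimal $d$-stratification can be taken compatibly (nested), so that the $X_i$ appearing in both bounds are literally the same sets; Lemma~\ref{strat1} and the nestedness built into the definition of optimal stratification should guarantee this.
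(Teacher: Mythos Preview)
Your overall strategy is exactly the paper's: apply Theorem~\ref{strat3} for the lower bound on $|\cF_k|$, apply Lemma~\ref{strat2} for the upper bound on $\sum_{F\in\cF_k}(|F|-k)$, divide, and add back $k$. The paper's proof is literally two invocations and a division, yielding the bound $2^{k(k-1)}/\sigma_k + k$.

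Your perceived ``main obstacle'' is not real, and your proposed fix for it would not work. You worry that Lemma~\ref{strat2} gives a product $(n-|X_1|)\cdots(n-|X_d|)$ with $d$ possibly larger than $k$, leaving surplus factors $(n-|X_{k+1}|)\cdots(n-|X_d|)$. You then propose to bound these surplus factors by $n^{d-k}$ and ``absorb them into the $k$-dependent constant.'' That is a genuine error: $n^{d-k}$ depends on $n$, not just on $k$, so it cannot be absorbed into a constant depending only on $k$. If the mismatch were real, your argument would fail here.

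Fortunately the mismatch is illusory. The statement of Lemma~\ref{strat2} is written with $X_d$, but the only stratification sets in play are $X_2,\ldots,X_k$; inspecting the proof, the bound actually produced is $2^{k(k-1)}\,n(n-|X_2|)\cdots(n-|X_{\min(k,d)}|)$ (Claim~\ref{T} only goes up to index $k$, the $\cH_0$ contribution is bounded by $|\cT_k|$, and each $\cH_W$ contribution by $|\cT_{a+1}|$ with $a+1\le k-1$). Since $r(M)\ge 2k-1$ forces $d\ge k$, the product stops exactly at $X_k$, matching Theorem~\ref{strat3} on the nose with no tail to absorb. This is precisely how the paper applies the lemma in its proof of Theorem~\ref{main_flats}. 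So drop the tail-factor discussion entirely and your proof is complete and identical to the paper's.
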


\begin{proof}
Let $(X_2,\ldots,X_k)$ be an optimal stratification in a simple, $n$-element, complex-representable matroid $M$ with rank at least $2k-1$.
By Theorem~\ref{strat3},
$$ |\cF_k(M)| \ge \sigma_k n(n-|X_2|)(n-|X_3|)\cdots (n-|X_k),$$
where $\sigma_k$ is given by Theorem~\ref{strat3}.
and, by Lemma~\ref{strat2},
$$
 \sum_{F\in\cF_k} (|F|-k) \le 2^{k(k-1)}n(n-|X_2|)\cdots (n-|X_k|).
$$
 Therefore, the average size of a rank-$k$ flat is at most
$$ \frac{2^{k(k-1)}}{\sigma_k} + k ,$$
as required.
\end{proof}

\section{The average size of a flat}

In this section we will prove Theorem~\ref{thm_flats}. For that we require a good estimate on the total number of flats, which we obtain via Lund's Theorem; this result is of independent interest. 
We let $\cF(M)$ denote the set of flats in a matroid $M$.  
\begin{theorem}\label{counting_flats}
For any integer $r\ge 2$, there is a real number $\alpha_r$, such that, for any simple, $n$-element, rank-$r$,  complex-representable matroid $M$ with essential dimension $d$, we have
$$ |\cF(M)|\ge \alpha_r n(n-g_2)\cdots (n-g_d).$$
\end{theorem}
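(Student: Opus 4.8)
The plan is to deduce the bound from Lund's Theorem applied at a single, carefully chosen rank level, rather than at the essential dimension $d$ itself. The naive attempt---applying Lund's Theorem directly to $\cF_d(M)$---would give exactly $|\cF_d(M)|\ge \rho_d\, n(n-g_2)\cdots(n-g_d)$, but only under the proviso $n-g_d>c_d$. As the rank-$6$ example of Figure~\ref{fig1} shows, this proviso genuinely can fail: there $n-g_5=1$ while $|\cF_5(M)|$ is only $\Theta(n^2)$, whereas the target product is $\Theta(n^3)$. Notably, in that example the target order of magnitude is instead attained by the rank-$3$ flats, which points to the correct strategy: apply Lund's Theorem at the largest level where its hypothesis holds, and absorb the remaining factors into the constant. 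A pleasant feature is that Lund's Theorem carries no rank hypothesis, so no restriction on $r$ beyond $r\ge 2$ will be needed.

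Concretely, I would set $C:=\max\{1,c_2,\ldots,c_r\}$, a constant depending only on $r$, where the $c_k$ are the integers from Lund's Theorem. First I would record that $g_k$ is nondecreasing in $k$, since every $k$-degenerate set is $(k+1)$-degenerate; hence the factors $n-g_i$ are nonincreasing. Assume $n-g_2>C$ and let $k$ be the largest index in $\{2,\ldots,d\}$ with $n-g_k>C$; this index exists, and by maximality $n-g_i\le C$ for every $i$ with $k<i\le d$. Since $n-g_k>C\ge c_k$, Lund's Theorem applies at level $k$ and gives $|\cF_k(M)|\ge \rho_k\, n(n-g_2)\cdots(n-g_k)$. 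The discarded tail is then controlled by the constant:
$$n(n-g_2)\cdots(n-g_d)=\Big(n(n-g_2)\cdots(n-g_k)\Big)\prod_{i=k+1}^{d}(n-g_i)\le C^{\,r}\, n(n-g_2)\cdots(n-g_k),$$
because $\prod_{i=k+1}^{d}(n-g_i)$ has at most $r$ factors, each lying in the interval $[1,C]$. Combining the two displayed inequalities yields $|\cF(M)|\ge |\cF_k(M)|\ge (\rho_k/C^{\,r})\, n(n-g_2)\cdots(n-g_d)$.

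It remains to treat the case $n-g_2\le C$, in which no admissible level $k$ exists. Here all of the at most $r$ factors $n-g_i$ ($2\le i\le d$) lie in $[1,C]$, so $n(n-g_2)\cdots(n-g_d)\le C^{\,r}n$; since a simple matroid has at least $n$ rank-$1$ flats, $|\cF(M)|\ge n\ge C^{-r}\, n(n-g_2)\cdots(n-g_d)$. Setting $\alpha_r:=C^{-r}\min\{1,\rho_2,\ldots,\rho_r\}$ then covers both cases. The one substantive point---the \emph{main obstacle}---is exactly the failure of Lund's proviso at the top level; the whole argument hinges on the observation that trading down to a lower level $k$ costs only a bounded multiplicative factor, since each discarded gap $n-g_i$ is then at most the constant $C$ and there are at most $r$ of them.
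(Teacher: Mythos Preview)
Your argument is correct and follows essentially the same route as the paper: pick the largest $\kappa$ at which Lund's hypothesis $n-g_\kappa>c_\kappa$ holds, apply Lund there, and absorb the remaining factors $n-g_i\le c_i$ (for $i>\kappa$) into the constant. The only cosmetic difference is that the paper invokes Lemma~\ref{lem:lines} to take $c_2=0$, which guarantees that an admissible $\kappa\ge 2$ always exists, whereas you instead dispose of the boundary case $n-g_2\le C$ by counting rank-$1$ flats.
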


\begin{proof}
Let $c_2,c_3,\ldots,c_d$ and $\rho_2,\ldots,\rho_d$ denote the constants given by Lund's Theorem; we may assume that $d \geq 2$; and by Lemma~\ref{lem:lines} we may assume that $c_2=0$. 
Now let 
$$\alpha_r :=\frac{\min(\rho_2,,\ldots,\rho_d)}{(c_2+1)\cdots(c_d+1)}.$$
Note that $n\ge r> g_2+c_2$. Now choose $\kappa\in\{2,\ldots,k\}$ maximum such that $n>g_{\kappa}+c_{\kappa}.$ By Lund's Theorem we have
$$ |\cF_{\kappa}(M)| \ge \rho_{\kappa} n(n-g_2)(n-g_3)\cdots (n-g_{\kappa}).$$
Therefore
\begin{align*}
|\cF| &\ge |\cF_{\kappa}| \\
&\ge \rho_{\kappa} n(n-g_2)(n-g_3)\cdots (n-g_{\kappa}) \\
&\ge \frac{\rho_{\kappa}}{c_{\kappa+1}\cdots c_d} n(n-g_2)(n-g_3)\cdots (n-g_{d}) \\
&\ge \alpha_r n(n-g_2)(n-g_3)\cdots (n-g_{d}),
\end{align*}
as required.
\end{proof}
The bound in Theorem~\ref{counting_flats} is correct up to a multiplicative error; that is, treating $r$ as a given constant,  $|\cF(M)|$ is $\Theta( n(n-g_1)\cdots (n-g_d))$. We will not prove this explicitly, but it follows from Lund~\cite[Theorem~2]{Lund}.

We now prove Theorem~\ref{thm_flats} which we restate here for convenience.
\begin{blah}[Theorem~\ref{thm_flats}]
For each integer $r\ge 2$,  the average flat-size in a simple rank-$r$ complex-representable matroid is bounded above by a constant depending only upon $r$.
\end{blah}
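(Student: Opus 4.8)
The plan is to bound the average flat-size $\frac{\sum_{F\in\cF(M)}|F|}{|\cF(M)|}$ by pairing the lower bound on $|\cF(M)|$ from Theorem~\ref{counting_flats} with an upper bound on the numerator coming from Lemma~\ref{strat2}. First I would split the numerator as $\sum_{F\in\cF}|F| = \sum_{F\in\cF}\big(|F|-r(F)\big) + \sum_{F\in\cF} r(F)$. The second sum is immediate: every flat has rank at most $r$, so $\sum_{F\in\cF} r(F)\le r\,|\cF|$, which contributes at most $r$ to the average. Thus the whole problem reduces to showing that $\sum_{F\in\cF}\big(|F|-r(F)\big)$ is at most a constant depending only on $r$ times $|\cF|$.

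For the remaining sum I would group flats by rank, writing $\sum_{F\in\cF}\big(|F|-r(F)\big)=\sum_{k=2}^{r}\sum_{F\in\cF_k}\big(|F|-k\big)$, where the rank-$0$ and rank-$1$ flats contribute nothing. For each fixed $k$ I would take an optimal $k$-stratification $(X_2,\dots,X_k)$ and apply Lemma~\ref{strat2}, bounding $\sum_{F\in\cF_k}(|F|-k)$ by $2^{k(k-1)}\,n(n-|X_2|)\cdots(n-|X_m|)$, where $m=\min(k,d)$ and $d$ is the essential dimension of $M$. Then Lemma~\ref{strat1}, which gives $n-|X_i|\le (k+1-i)(n-g_i)$, lets me replace the stratification quantities by the global parameters $g_i$; since each factor $n-g_i$ with $i\le d$ is at least $1$ (as $M$ is not $d$-degenerate, hence not $i$-degenerate for $i\le d$), I can also pad the product out to length $d$, obtaining $\sum_{F\in\cF_k}(|F|-k)\le 2^{k(k-1)}(k-1)!\;n(n-g_2)\cdots(n-g_d)$. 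Summing over $k\in\{2,\dots,r\}$ yields a bound of the form $C_r\, n(n-g_2)\cdots(n-g_d)$, with $C_r$ depending only on $r$ because $k$ ranges only up to $r$.

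At this point the two estimates have exactly the same shape. By Theorem~\ref{counting_flats}, $|\cF|\ge \alpha_r\, n(n-g_2)\cdots(n-g_d)$, so dividing gives $\sum_{F\in\cF}\big(|F|-r(F)\big)\le (C_r/\alpha_r)\,|\cF|$, whence the average flat-size is at most $C_r/\alpha_r + r$, a constant depending only on $r$. The low-rank degenerate case where Theorem~\ref{counting_flats} does not apply—essentially $r=2$, in which $M$ is a single line and the flats are the points together with $E(M)$—can be checked directly and gives average flat-size less than $2$.

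The main obstacle, and the step requiring the most care, is matching the product in the Lemma~\ref{strat2} upper bound to the product $n(n-g_2)\cdots(n-g_d)$ in the Theorem~\ref{counting_flats} lower bound. The upper bound arrives indexed by a $k$-stratification and truncated at $\min(k,d)$, so I must pass from the $|X_i|$ to the $g_i$ via Lemma~\ref{strat1} and then argue that extending the product to $d$ factors only increases it (using $n-g_i\ge 1$ for $i\le d$). Once the two products are aligned, the rest is routine bookkeeping: verifying that the per-rank constants $2^{k(k-1)}(k-1)!$ aggregate to a single constant depending only on $r$, which holds precisely because $r$ is fixed and $k$ does not exceed $r$.
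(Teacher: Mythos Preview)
Your proposal is correct and follows essentially the same route as the paper: apply Lemma~\ref{strat2} rank-by-rank, convert the stratification quantities to the $g_i$ via Lemma~\ref{strat1}, sum over $k\le r$, and divide by the lower bound on $|\cF(M)|$ from Theorem~\ref{counting_flats}. The only cosmetic differences are that the paper subtracts the constant $r$ rather than $r(F)$ from each $|F|$ (equivalent, since $r(F)\le r$) and records the cruder constant $(k-1)^k$ in place of your $(k-1)!$ when invoking Lemma~\ref{strat1}.
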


\begin{proof}
Let $M$ be a simple rank-$r$ complex-representable matroid. By Theorem~\ref{counting_flats},
$$ |\cF(M)|\ge \alpha_r n(n-g_2)\cdots (n-g_d),$$
where $d$ is the essential dimension of $M$ and $\alpha_r$ is the constant given by the theorem.
Now by Lemmas~\ref{strat1} and~\ref{strat2}, for any integer $k\ge 2$, we have
$$
\sum_{F\in\cF_k} (|F|-k)  \le 2^{k(k-1)}(k-1)^k  n(n-g_2)\cdots (n-g_d).
$$
Therefore 
\begin{align*}  \sum_{F\in\cF(M)} (|F|-r) &\le (r+1)2^{r(r-1)}(r-1)^r  n(n-g_2)\cdots (n-g_d)\\
&\le \frac{(r+1)2^{r(r-1)}(r-1)^r}{\alpha_r} |\cF(M)|.
\end{align*}
Hence the average flat size is at most 
$$ \frac{(r+1)2^{r(r-1)}(r-1)^r}{\alpha_r}+r,$$
as required.
\end{proof}

\section{Top heavy matroids}\label{top-heavy}

De Bruijn and Erd\H os~\cite{DeBruijn} proved that, in a rank-$3$ matroid, there are always at least as many lines as points. 
\begin{theorem}[De Bruijn, Erd\H os]\label{DB-E}
The number of lines in a rank-$3$ matroid is always at least the number of points.
\end{theorem}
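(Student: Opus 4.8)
The plan is to prove this by a weighted double-counting argument over non-incident point–line pairs, which is the classical approach of de~Bruijn and Erd\H{o}s. First I would reduce to the case where the matroid is simple: passing to the simplification leaves the rank-$2$ flats (the lines) unchanged while replacing the points by the rank-$1$ flats, so it suffices to treat a simple rank-$3$ matroid in which $n$ points and $m$ lines satisfy the usual incidence axioms---every two points lie on a unique line, every line has at least two points, and, because the rank is $3$, no single line contains all of the points. Write $r_i$ for the number of lines through the point $p_i$ and $k_j$ for the number of points on the line $\ell_j$.

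The two basic ingredients are an identity and a geometric inequality. Counting non-incident pairs $(p_i,\ell_j)$ in two ways gives $\sum_i (m-r_i)=\sum_j (n-k_j)$, and I would also record the easy facts that $m-r_i\ge 1$ for every point (some line misses $p_i$, since otherwise all points would be collinear and the rank would drop to $2$) and that $n-k_j\ge 1$ for every line (some point lies off $\ell_j$, again because the rank is $3$). The geometric inequality is that whenever $p_i\notin\ell_j$ we have $r_i\ge k_j$: projecting the $k_j$ points of $\ell_j$ from $p_i$ yields $k_j$ distinct lines through $p_i$, since two of them coinciding would force two points of $\ell_j$ onto a common line through $p_i$, hence $p_i\in\ell_j$, a contradiction.

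The crux is the choice of weights. Suppose for contradiction that $m<n$. For each non-incident pair I would combine $r_i\ge k_j$ with $n>m$ to obtain $n r_i\ge n k_j>m k_j$, and hence, after rearranging, $n(m-r_i)<m(n-k_j)$; since both sides are positive this yields the strict termwise inequality $\frac{1}{n(m-r_i)}>\frac{1}{m(n-k_j)}$. Summing over all non-incident pairs, the left-hand side collapses to $\sum_i \frac{m-r_i}{n(m-r_i)}=\sum_i \frac{1}{n}=1$ and the right-hand side to $\sum_j \frac{n-k_j}{m(n-k_j)}=\sum_j \frac{1}{m}=1$, so $1>1$, a contradiction. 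Therefore $m\ge n$, as claimed.

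I expect the main obstacle to be expository rather than mathematical: keeping the direction of every inequality consistent so that the two weighted sums collapse to the same constant while the termwise comparison remains strict. The geometric lemma $r_i\ge k_j$ and the positivity of the denominators $m-r_i$ and $n-k_j$ are the only places where the hypothesis that the rank is exactly $3$ (equivalently, that not all points are collinear) enters, so I would dispatch these degenerate points before invoking the weighting. As an alternative one could instead show that the point–line incidence matrix has full column rank, following Basterfield--Kelly and Motzkin, but the double count above is shorter and entirely self-contained.
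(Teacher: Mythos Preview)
The paper does not supply its own proof of this statement: Theorem~\ref{DB-E} is quoted as the classical De~Bruijn--Erd\H{o}s theorem with a citation to the original paper, and is used as a black box (notably inside the proof of Theorem~\ref{lund_planes}). So there is nothing in the paper to compare your argument against.

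That said, your proof is correct and is precisely the original De~Bruijn--Erd\H{o}s weighted double count. The reduction to the simple case is appropriate given the paper's convention that points are rank-$1$ flats. The three preliminary facts you isolate---$m-r_i\ge 1$, $n-k_j\ge 1$, and $r_i\ge k_j$ whenever $p_i\notin\ell_j$---are exactly where the rank-$3$ hypothesis is consumed, and your derivations of each are sound in the matroid setting (two distinct points span a unique line, so the projection argument for $r_i\ge k_j$ goes through verbatim). The chain $n r_i\ge n k_j>m k_j$ is strict because $k_j\ge 2>0$, hence $n(m-r_i)<m(n-k_j)$ termwise, and the collapse of both weighted sums to $1$ is clean. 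One point worth making explicit in a write-up is that the set of non-incident pairs is non-empty (which follows immediately from $n-k_j\ge 1$ for any fixed $j$), so that strictness survives the summation; you have the ingredients for this but do not state it. The alternative linear-algebraic route you mention (Basterfield--Kelly, or Motzkin--Conway) is indeed another standard proof and would work equally well here.
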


Dowling and Wilson~\cite{DW} posed the Top Heavy Conjecture as a possible
generalization of the De Bruijn-Erd\H os Theorem. Their conjecture was recently proved Brandon, Huh, Matherne, Proudfoot, and Wang~\cite{BHMPW}; the proof for representable matroids was found earlier by Huh and Wang~\cite{HW} and is considerably shorter.
\begin{theorem}[Top Heavy Conjecture]\label{top-heavy-conjecture}
For any integers $a$, $b$, and $r$ with $1\le a< b\le r-a$, if $M$ is a rank-$r$ matroid, then $|\cF_a(M)|\le |\cF_{b}(M)|$.
\end{theorem}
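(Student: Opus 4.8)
The plan is to deduce this inequality from a hard Lefschetz theorem applied to a graded object attached to $M$, following the Hodge-theoretic strategy of Huh--Wang in the representable case and of Braden--Huh--Matherne--Proudfoot--Wang in general. The quantities to be compared are the Whitney numbers of the second kind $W_k:=|\cF_k(M)|$, and the first step is to realize these as the graded dimensions of the \emph{graded M\"obius algebra} $H(M)=\bigoplus_k H^k(M)$, where $H^k(M)$ has a basis indexed by the rank-$k$ flats of $M$, so that $\dim H^k(M)=W_k$. With this identification, the theorem becomes equivalent to the statement that, for a suitable \emph{Lefschetz element} $\ell\in H^1(M)$ and all $a$, $b$ with $1\le a<b\le r-a$, the multiplication
$$\ell^{\,b-a}\colon H^a(M)\longrightarrow H^b(M)$$
is injective; taking dimensions then yields $W_a\le W_b$. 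Note that the hypothesis $b\le r-a$ is exactly $a+b\le r$, which together with $2a\le r$ is precisely the range in which a power of a Lefschetz operator is injective below the middle degree.

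In the complex-representable case relevant to this paper, I would establish this injectivity geometrically. Realize $M$ by points in $\mathbb{P}^{r-1}(\bC)$ and form the associated \emph{matroid Schubert variety} $Y_M$, a complex projective variety of dimension $r$. Using the classical hard Lefschetz theorem for the intersection cohomology $IH^{\bullet}(Y_M)$ of this (generally singular) variety---whose odd part vanishes---together with the decomposition theorem relating $IH^{\bullet}(Y_M)$ to the cohomology of a smooth resolution, one transfers the Lefschetz isomorphisms $\ell^{\,r-2k}\colon IH^{2k}\to IH^{2(r-k)}$ to the desired injectivity of $\ell^{\,b-a}$ on the graded M\"obius algebra, giving $W_a\le W_b$. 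The genuine combinatorial work here is only the identification of the flat-counts $W_k$ with the relevant even-degree graded dimensions; the Lefschetz property itself is inherited from the deep but established theory of intersection cohomology of complex projective varieties.

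For an arbitrary (non-representable) matroid there is no variety to appeal to, so here the plan is the one carried out by Braden--Huh--Matherne--Proudfoot--Wang: construct a combinatorial \emph{intersection cohomology module} $IH(M)$ over $H(M)$ directly from the lattice of flats, and prove by a simultaneous induction along deletions and contractions that the pair $(H(M),IH(M))$ satisfies the full K\"ahler package---Poincar\'e duality, hard Lefschetz, and the Hodge--Riemann bilinear relations. Hard Lefschetz for this combinatorial module then supplies the injective multiplication maps, and hence the inequality, with no representability hypothesis.

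The main obstacle is precisely the hard Lefschetz property. In the representable case it is essentially free, so that route fully suffices for the applications in this paper, which concern only complex-representable matroids. In the general case, by contrast, the entire K\"ahler package must be bootstrapped combinatorially: the hard Lefschetz and Hodge--Riemann statements are mutually entangled and must be propagated together through the deletion--contraction induction, and managing that interlocking induction is the deep and delicate heart of the argument---it is what makes the full conjecture dramatically harder than its representable specialization.
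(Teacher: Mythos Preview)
The paper does not prove this theorem; it quotes it as a known result, attributing the representable case to Huh and Wang~\cite{HW} and the general case to Braden, Huh, Matherne, Proudfoot, and Wang~\cite{BHMPW}. Your proposal is an accurate high-level summary of exactly those two proofs---the matroid Schubert variety plus intersection-cohomology hard Lefschetz in the representable case, and the combinatorial K\"ahler package in the general case---so in that sense it matches the paper's (cited) proof. For the purposes of this paper only the representable case is ever used, so your first route fully suffices here.
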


We say that a rank-$r$ matroid $M$ is {\em top heavy} if $|\cF_{k}|$ is maximized when $k=r-1$.
Thus, for a top heavy matroid, the number of hyperplanes as a fraction of the total number of flats is at least $\frac{1}{r+1}$. Therefore, when $M$ is complex-representable and top-heavy, since the average flat size is bounded, so too is the average size of a hyperplane.
\begin{corollary}\label{top-heavy-average}
For any integer $r\ge 2$, in a top-heavy, complex-representable, rank-$r$ matroid, the average
hyperplane-size is bounded above by a constant depending only on $r$.
\end{corollary}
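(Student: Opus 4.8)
The final statement is Corollary~\ref{top-heavy-average}, which asserts that in a top-heavy, complex-representable, rank-$r$ matroid, the average hyperplane-size is bounded by a constant depending only on $r$. The key observation is already laid out in the paragraph immediately preceding the corollary: top-heaviness pins down the hyperplanes as a constant fraction of all flats, and Theorem~\ref{thm_flats} controls the average flat-size. The plan is simply to convert the bound on average flat-size into a bound on average hyperplane-size using this fraction.

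First I would fix a simple, complex-representable, rank-$r$ matroid $M$ that is top-heavy, and let $n=|E(M)|$. By definition of top-heavy, $|\cF_{r-1}(M)|\ge |\cF_k(M)|$ for every $k$. Since the flats of $M$ are partitioned by rank into the classes $\cF_0,\cF_1,\ldots,\cF_r$ (of which there are $r+1$), and each $|\cF_k|$ is at most $|\cF_{r-1}|$, summing over all $k$ yields $|\cF(M)|\le (r+1)|\cF_{r-1}(M)|$, i.e.
$$ |\cF_{r-1}(M)| \ge \frac{1}{r+1}|\cF(M)|. $$
This is the quantitative form of the remark that hyperplanes make up at least a $\frac{1}{r+1}$-fraction of all flats.

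Next I would invoke Theorem~\ref{thm_flats}, which gives a constant $\beta_r$, depending only on $r$, such that the average flat-size is at most $\beta_r$; equivalently $\sum_{F\in\cF(M)}|F|\le \beta_r|\cF(M)|$. Since every hyperplane is a flat, the total hyperplane-size is dominated by the total flat-size:
$$ \sum_{H\in\cF_{r-1}(M)}|H| \le \sum_{F\in\cF(M)}|F| \le \beta_r |\cF(M)| \le \beta_r(r+1)|\cF_{r-1}(M)|. $$
Dividing through by $|\cF_{r-1}(M)|$ shows that the average hyperplane-size is at most $\beta_r(r+1)$, a constant depending only on $r$, as required.

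There is no genuine obstacle here: the corollary is a short deduction, and the only things to verify are the two elementary inequalities above. The one point meriting a word of care is the count of flat-rank classes — one should note that $\cF_0$ (the empty flat or rank-$0$ closure, depending on whether the matroid has loops, but $M$ is simple so $\cF_0$ is the single flat of rank $0$) and $\cF_r(M)=\{E(M)\}$ are included, so that there are exactly $r+1$ nonempty rank-classes and the factor $r+1$ is correct. All the real work has already been done in Theorem~\ref{thm_flats} and in the Top Heavy Theorem (Theorem~\ref{top-heavy-conjecture}), which underlies the very definition of top-heavy; the corollary merely packages them together.
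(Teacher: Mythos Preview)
Your proposal is correct and follows exactly the approach the paper sketches in the paragraph immediately preceding the corollary: top-heaviness gives $|\cF_{r-1}|\ge \frac{1}{r+1}|\cF|$, and combining this with Theorem~\ref{thm_flats} bounds the average hyperplane-size by $(r+1)\beta_r$. The only minor inaccuracy is your closing remark that the Top Heavy Conjecture ``underlies the very definition of top-heavy''; in fact the definition is just a property a matroid may or may not have, and Theorem~\ref{top-heavy-conjecture} is not needed for this corollary at all (it is used only later, to verify that certain matroids \emph{are} top-heavy).
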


Lund proved that complex-representable matroids are top heavy unless they are almost degenerate.
\begin{theorem}[Lund] For every integer $r>2$ there is an integer $c_r$ such that,
if $M$ is a simple rank-$r$ complex-representable matroid with $|E(M)|> g_{r-1}(M)+c_r$, then
$M$ is top heavy.
\end{theorem}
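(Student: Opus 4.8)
The plan is to show that, once $M$ is bounded away from being $(r-1)$-degenerate, the hyperplane count $|\cF_{r-1}|$ dominates every other $|\cF_k|$ via a lower-bound/upper-bound sandwich in which both bounds are governed by the same product $P_k:=n(n-g_2)\cdots(n-g_k)$. First I would pin down the essential dimension. Since $|E(M)|>g_{r-1}(M)+c_r$ gives $n-g_{r-1}>c_r\ge 0$, the matroid is not $(r-1)$-degenerate, so its essential dimension $d$ satisfies $d\ge r-1$; combined with $d\le r(M)-1=r-1$ this forces $d=r-1$. In particular $n-g_j>0$ for every $j\in\{2,\ldots,r-1\}$, so every factor of $P_{r-1}$ is positive, and (insisting that $c_r\ge c_{r-1}$) Lund's Theorem applies at level $k=r-1$ to give
$$|\cF_{r-1}(M)|\ge \rho_{r-1}\,P_{r-1}.$$

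For the upper bounds I would invoke Lund's companion estimate, already quoted in Section~4, that $|\cF_k(M)|=O(P_k)$ for fixed $r$; write $|\cF_k(M)|\le C_k P_k$ with $C_k$ depending only on $r$. The key structural input is the monotonicity $g_2\le g_3\le\cdots\le g_{r-1}$ (a $j$-degenerate set is $(j+1)$-degenerate), which yields $n-g_j\ge n-g_{r-1}>c_r$ for all $j\le r-1$. Dividing the two estimates gives, for each $k\in\{1,\ldots,r-1\}$,
$$\frac{|\cF_{r-1}(M)|}{|\cF_k(M)|}\ge \frac{\rho_{r-1}}{C_k}\prod_{j=k+1}^{r-1}(n-g_j)\ge \frac{\rho_{r-1}}{C_k}\,c_r^{\,(r-1)-k}.$$
Whenever $k\le r-2$ the exponent is at least $1$, so choosing $c_r$ large enough (depending only on $r$, through the finitely many constants $\rho_{r-1},C_1,\ldots,C_{r-2}$) makes each such ratio at least $1$; the case $k=r-1$ is trivial, and $|\cF_r|=1$ and $|\cF_0|=1$ are dominated at once. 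Setting $c_r$ to be the maximum of $c_{r-1}$ and these finitely many thresholds then yields $|\cF_{r-1}|\ge |\cF_k|$ for all $k$, which is exactly the assertion that $M$ is top heavy.

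I expect the genuinely substantive ingredient to be the matching upper bound $|\cF_k|=O(P_k)$: the lower bound is precisely Lund's Theorem and the ratio computation is elementary, but without a refined upper bound one has only the trivial $|\cF_k|\le n^k$, which is useless here since $P_k$ can be far smaller than $n^k$ when the $g_j$ are large. It is also worth stressing that the Top Heavy Theorem by itself does not suffice: it compares $\cF_a$ with $\cF_b$ only when $b\le r-a$, and hence does not directly control the middle ranks $2\le k\le r-2$ against $\cF_{r-1}$. The explicit sandwich through the common product $P_k$ is what closes that gap, and the role of the hypothesis $|E(M)|>g_{r-1}(M)+c_r$ is exactly to make each extra factor $n-g_j$ large enough to absorb the constant $C_k/\rho_{r-1}$.
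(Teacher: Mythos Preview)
The paper does not actually prove this theorem; it is attributed to Lund and quoted without proof in Section~\ref{top-heavy}. Your argument is correct and is, in essence, Lund's own proof: the lower bound $|\cF_{r-1}|\ge\rho_{r-1}P_{r-1}$ from Lund's Theorem, together with the companion upper bound $|\cF_k|=O(P_k)$ (both quoted in Section~4), reduces the comparison $|\cF_{r-1}|\ge|\cF_k|$ to the observation that each surplus factor $n-g_j$ for $k<j\le r-1$ exceeds $c_r$ by hypothesis and monotonicity of the $g_j$, so the ratio can be forced to be at least $1$ by choosing $c_r$ large in terms of the finitely many constants $\rho_{r-1},C_1,\ldots,C_{r-2}$.

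Two minor remarks. First, the explicit computation of the essential dimension as $r-1$ is not really needed for the argument: all you actually use is that $P_k>0$ for every $k\le r-1$, and this follows directly from $g_k\le g_{r-1}<n$. Second, the upper bound $|\cF_k|\le C_kP_k$ should be read as holding when $M$ is not $k$-degenerate (otherwise $P_k=0$ while rank-$k$ flats may still exist), but your hypothesis guarantees this for every $k\le r-1$, and the remaining cases $k\in\{0,r\}$ are trivial as you note.
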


Let $M$ be a complex-representable matroid with rank at least $2k-1$. By Theorem~\ref{top-heavy-conjecture}, we have
$$
|\cF_k| \ge |\cF_{k-1}| \ge\cdots\ge |\cF_1|.
$$
That is, the truncation $M'$ of $M$ to rank-$(k+1)$ is top heavy. Then applying Theorem~\ref{top-heavy-average} to $M'$, we see that the average size of a rank-$k$ in $M$ is bounded above by a constant depending only on $k$. This gives an alternative proof of Theorem~\ref{main_flats}.

\section*{Acknowledgements}
We thank Ben Lund, Peter Nelson, and James Oxley for interesting discussions based on an earlier version of this paper. In particular, Ben and James gave counterexamples to the earlier versions of the conjectures and Peter raised some interesting new questions.

\appendix

\section{Matroid terminology}

This appendix borrows liberally from a paper of Geelen and Kroeker~\cite{GeKr}, but is customized to the needs of this paper.

This paper is primarily concerned with ``representable matroids", so we will keep our discussion here to that level of generality. For a more general and thorough introduction to matroid theory, see Oxley~\cite{Oxley}. 

Let $A$ be a matrix, with entries in a field $\bF$, whose set of column-indices is $E$.
We let $\cI$ denote the collection of all subsets $I$ of $E$ such that the columns
of $A$ indexed by $I$ are linearly independent. The {\em matroid represented by $A$} is the pair
$M(A)=(E,\cI)$; the set $E$ is the {\em ground-set} of $M(A)$ and the sets in $\cI$ are the
{\em independent sets}. Thus, the matroid captures the combinatorial information about linear independence without explicitly referring to the specific coordinatization. We say that a matroid is
{\em $\bF$-representable} if it can be represented by a matrix over $\bF$.

It is well-known (and an easy consequence of Hilbert's nullstellensatz) that the matroids representable over any field of characteristic zero are also complex-representable. Therefore the results in this paper apply to any field of characteristic zero.

The definition allows for elements to be represented by the zero-vector; such elements are called {\em loops}. The definition also allows for two elements to be represented by vectors that are scalar multiples of each other; such pairs are said to be in {\em parallel}. 
We call a matroid {\em simple} if it has no loops and no parallel pairs. The matroids coming from geometry are simple, so it suffices for us to consider simple matroids. Nevertheless, loops and parallel pairs do arise in our proofs as a result of ``contraction" (or projection).

The {\em rank} of a set $X\subseteq E$, denoted $r(X)$, is the maximum size of an independent subset of $X$. The {\em rank of $M$}, denoted $r(M)$, is the rank of $E$.  Borrowing terminology from geometry, a {\em rank-$k$ flat} of $M$ is an inclusion-wise maximal subset of $E$ with rank $k$. Thus a {\em point} is a rank-$1$ flat, a {\em line} is a rank-$2$ flat, a {\em plane} is a rank-$3$ flat, and a {\em hyperplane} is a rank-$(r(M)-1)$ flat. The rank-$k$ flats in a matroid correspond to the ``spanned'' $(k-1)$-flats in geometry. 

Note that each set $X\subseteq E$ is contained in a unique flat of rank $r(X)$; we denote that flat by $\cl(X)$ and we refer to that set as the {\em closure} of $X$.
We say that flats $F_1$ and $F_2$ are {\em skew}  if  $r(F_1\cup F_2)=r(F_1)+r(F_2)$. In particular, two lines are skew if their union has rank $4$.

We need the operations of ``restriction" and ``contraction" on matroids. For a set $X\subseteq E$, the {\em restriction} of $M$ to the set $X$ is defined as you would expect and is denoted by $M|X$ or by $M\del (E(M)\setminus X)$.
For a set $C\subseteq E$, we define the {\em contraction} of $C$ in $M$ by $M\con C:= (E\setminus C,\cI')$ where $I\subseteq E\setminus C$ is in $\cI'$ if and only if $r(C\cup I) = r(C)+|I|$. We note that $M\con C$ is a complex-representable matroid; one obtains the representation by ``projecting out" or ``quotienting on" the subspace spanned by $C$. After projection the elements of $C$ become loops so we delete them.  

We also require one further operation, namely ``principal truncation".
Let $F$ be a flat in an $\bF$-representable matroid $M=(E,\cI)$, where $\bF$ is an infinite field.
Let $M^+=(E\cup\{e\},\cI^+)$ be the $\bF$-representable matroid obtained by
adding a new column, indexed by $e$, as freely as possible in the span of $F$ to the representation of $M$. The matroid $M^+$ is uniquely determined by $M$ and $F$. For $X\subseteq E$, we have $r_{M^+}(X\cup \{e\}) = r(M)$ if $F\subseteq\cl(X)$ and $r_{M^+}(X\cup \{e\}) = r(M)+1$ otherwise.
We call $M^+$ the {\em principal extension} of $M$ into $F$, and we refer to $M^+\con e$ as the 
{\em principal truncation} of $F$. In the proof of Theorem~\ref{lund_planes}, we consider the principal truncation of a line; note that the line is reduced to a point by this operation.

\section{Dowling geometries}

Dowling geometries (see~\cite{Dowling}) are a class of matroids with large average flat-size. Before giving the technical definition, we roughly describe the key structural features. Consider a simple rank-$r$ matroid $M$ having a basis $V=\{v_1,\ldots,v_r\}$ such that each element of $M$ is spanned by some pair of elements in $V$ and, for each $1\le i<j\le r$, the pair $\{v_i,v_j\}$ spans a line, say $L_{i,j}$, with at least three points. Every hyperplane of $M$ that does not contain the line $L_{i,j}$ intersects the line in at most one point. Dowling geometries are defined so that every hyperplane hits the line $L_{i,j}$ in at least one point.

Let $G=(S,\times)$ be a finite abelian group, and $r$ be a positive integer.
We define a rank-$r$ matroid $\DG(r,G)$ with ground set
$E=\{v_1,\ldots,v_n\}\cup \{e_{i,j,g}\, :\, 1\le i<j\le n,\, g\in G\}$ as follows.
For each $X\subsetneq \{1,\ldots,n\}$ and sequence $(g_i\, :\, i\in\{1,\ldots,n\}\setminus X)$
the set 
\begin{gather}
\{v_i\,:\, i\in X\}\cup \{e_{i,j,g}\, :\, i,j\in X,\, i<j,\, g\in G\}\notag\\
\cup\{e_{i,j,g}\, :\,1\le i<j\le n,\,i,j\not\in X,\, g_j=g_i\times g\}\notag 
\end{gather}
is a hyperplane, and these are the only hyperplanes.
The size of the hyperplane determined by $X\subsetneq \{1,\ldots,n\}$ and sequence $(g_i\, :\, i\in\{1,\ldots,n\}\setminus X)$ is
$$|X|+|G|{|G|\choose 2} +{n-|G|\choose 2}.$$
There is some redundancy in the description since multiplying the group-elements
$(g_i\, :\, i\in\{1,\ldots,n\}\setminus X)$ by a fixed group-element will give the same 
hyperplane.

The Dowling Geometry is representable over field $\bF$ if $G$ is a subgroup of the multiplicative group of $\bF$. Over the reals, the only non-trivial subgroup is 
$(\{\pm 1\},\times)$. Over the complex-numbers we get any finite cyclic group.
Thus $\DG(r,\bZ_2)$ is real-representable and $\DG(r,\bZ_t)$ is complex-representable 
for all $t\ge 2$.

The lower-bounds on average flat-sizes in Section~2 rely on the following results.
\begin{lemma}\label{weakbounds}
For a fixed integers $r$ and $k$ with $2\le k<r$, the number of rank-$k$ flats in 
$\DG(r,\bZ_t)$ is $\Omega(t^{k})$ and the average size of a rank-$k$ flat is at most 
${k+1\choose 2}+o_t(1)$. Moreover, for $k=r-1$, the size is equal to ${r\choose 2}+o_t(1)$.
\end{lemma}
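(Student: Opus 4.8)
The plan is to classify the flats of $\DG(r,\bZ_t)$ explicitly and then read off both the number of rank-$k$ flats and their average size as polynomials in $t$. I would work in the representation over $\bC$ in which $v_i\mapsto \mathbf b_i$ is a standard basis vector and $e_{i,j,g}\mapsto \mathbf b_i-g\mathbf b_j$, with $\bZ_t$ realised as the group of $t$-th roots of unity. Writing $W$ for the subspace spanned by a flat $F$, the first step is to show that $F$ is determined by a partition of $\{1,\dots,r\}$ into a single \emph{grounded} block $C=\{i:\mathbf b_i\in W\}$, some \emph{floating} blocks $B_1,\dots,B_s$, each of size at least two and each carrying a gain-labelling $B_l\to\bZ_t$ up to a global scalar, together with the remaining \emph{inactive} coordinates. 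The key observations are: the grounded coordinates coalesce into one block, since $\mathbf b_i,\mathbf b_j\in W$ forces the whole line $L_{i,j}\subseteq F$; no connecting element $e_{i,j,g}$ can join two different blocks or join a block to an inactive coordinate; and a floating block on $b$ coordinates has rank $b-1$ and contributes exactly one connecting element per pair, hence $\binom b2$ elements, while the grounded block contributes $|C|+\binom{|C|}2 t$ elements. Consequently $k=|C|+\sum_l(|B_l|-1)$ and $|F|=|C|+\binom{|C|}2 t+\sum_l\binom{|B_l|}2$.

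Next I would count flats of a fixed shape. Because the only dependence on $t$ comes from the $t^{|B_l|-1}$ gain-classes carried by each floating block, the number of flats with prescribed block sizes equals a constant depending only on $r$ times $t^{\sum_l(|B_l|-1)}=t^{k-|C|}$. In particular, a single floating block of size $k+1$ (available since $k+1\le r$) already produces $\binom{r}{k+1}t^{k}$ rank-$k$ flats, so $|\cF_k|=\Omega(t^k)$. Moreover every shape with a nonempty grounded block is of strictly smaller order: such a flat is counted $O(t^{k-|C|})$ times and has size $O(t)$, so its contribution to $\sum_F|F|$ is $O(t^{k-|C|+1})=O(t^{k-1})$ when $|C|\ge 2$, and is $O(t^{k-1})$ when $|C|=1$ (using $\binom 12=0$, so the size is then $O(1)$); likewise its contribution to $|\cF_k|$ is $O(t^{k-1})$.

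Finally I would compute the average. Both $N_k(t):=|\cF_k|$ and $S_k(t):=\sum_{F}|F|$ are polynomials in $t$ of degree $k$, and by the previous paragraph the coefficient of $t^k$ in each is contributed only by the flats with empty grounded block; writing $A$ and $B$ for these two coefficients, the average equals $S_k/N_k=B/A+o_t(1)$. A flat with empty grounded block and floating ranks $a_l:=|B_l|-1$ (so $\sum_l a_l=k$) has size $\sum_l\binom{a_l+1}2=\tfrac12\big(\sum_l a_l^2+k\big)$, which by convexity is largest when a single block carries all of $k$; thus every such flat has size at most $\binom{k+1}2$, and since $B/A$ is a weighted average of these sizes, the average rank-$k$ flat size is at most $\binom{k+1}2+o_t(1)$. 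For $k=r-1$ the constraints $\sum_l|B_l|\le r$ and $\sum_l(|B_l|-1)=r-1$ force $s=1$ and $|B_1|=r$, so the only shape of full order is the single floating block on all of $\{1,\dots,r\}$, which has size exactly $\binom r2$; hence the average hyperplane size is $\binom r2+o_t(1)$.

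The main obstacle is the structural classification in the first step: I must verify that the grounded/floating/inactive trichotomy is exhaustive, that distinct blocks never interact through a connecting element, and that each floating block on $b$ coordinates is counted by exactly $t^{b-1}$ gain-classes (the free action of $\bZ_t$ on labellings by global scaling). Once this bookkeeping is in place, the order-of-magnitude counting and the convexity estimate are routine.
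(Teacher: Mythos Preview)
Your argument is correct and follows essentially the same structural picture as the paper: both proofs identify the rank-$k$ flats disjoint from $\{v_1,\dots,v_r\}$ with partitions of $\{1,\dots,r\}$ into blocks, compute the size as $\sum_l\binom{|B_l|}{2}$, and use convexity to bound this by $\binom{k+1}{2}$, while showing that flats meeting $\{v_1,\dots,v_r\}$ (your ``grounded'' case) contribute only $o(t^k)$ to both the count and the total size. The one substantive difference is that the paper invokes Do's Theorem to get the $\Omega(t^k)$ lower bound on $|\cF_k|$, whereas you obtain it directly by exhibiting the $\binom{r}{k+1}t^k$ flats with a single floating block of size $k+1$; your route is more self-contained and in fact yields the exact leading coefficient of $|\cF_k|$ as a polynomial in $t$, which makes the $o_t(1)$ computation cleaner. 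Your explicit grounded/floating/inactive classification is also more careful than the paper's informal treatment (which has a few typos in the indexing of the partition), and the verification you flag as the ``main obstacle'' is indeed routine once you use the coordinate representation as you describe.
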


\begin{proof}
The number of elements in $\DG(r,\bZ^t)$ is $r+t{r\choose 2}$ which is $\Omega(t)$, since we are taking $r$ to be constant. It is an easy application of Do's Theorem that the number of 
rank-$k$ flats is $\Omega(t^k)$. 

The number of $(t+2)$-point lines is ${r\choose 2}$, which, for constant $r$, is $O(1)$. The number of rank-$k$ flats that contains one of these lines is $\Omega(t^{k-2})$ and each of these flats has size $O(t)$, so the sum of the sizes of these flats is $o(t^k)$; thus, for large $t$, these flats do not contribute significantly to the average. 

Any other rank-$k$ flat hits each of the $t+2$-point lines in at most one point, so those flats have size at most ${r \choose 2}$, which is $O(1)$. The number of flats rank-$k$ flats that contain one of
$\{v_1,\ldots,v_r\}$ is $\Omega(t^{k-1})$, so these flats do not contribute significantly to the average.

Consider a rank-$k$ flat $F$ disjoint from $\{v_1,\ldots,v_r\}$.
For $1\le a<b<c\le r$, if $F$ contains a point from two of the lines
$L_{a,b}$, $L_{a,c}$, and $L_{b,c}$, then it also contains a line from the third.
It follows that there is a partition $(X_1,\ldots,X_{n-k+1})$ of $\{1,\ldots,r\}$ into non-empty sets
such that, for $1\le i<j\le r$, the flat $F$ contains a point on $L_{i,j}$ if and only if $i$ and $j$ 
are in the same part of the partition $(X_1,\ldots,X_{n-k})$. Thus 
$$ |F| = {|X_1|\choose 2}+{|X_2|\choose 2}+\cdots+{|X_{n-k}|\choose 2}.$$
This is maximized when all but one of the sets $(X_1,\ldots,X_n-k+1)$ are singletons, in which case $|F|={k+1\choose 2}.$ Moreover, when $k=r-1$ we have $n-k=1$ so $|F|={r\choose 2}.$
\end{proof}

Lemma~\ref{weakbounds} implies that the average flat-size in $\DG(r,\bZ_t)$ is equal to ${r\choose 2}+o_t(1)$.
\end{document}